 \newtheorem{thm}{Theorem}[section]
 \newtheorem{cor}[thm]{Corollary}
 \newtheorem{prop}[thm]{Proposition}
 \theoremstyle{definition}
 \newtheorem{defn}[thm]{Definition}
 \theoremstyle{remark}
 \newtheorem{rem}[thm]{Remark}
 \numberwithin{equation}{section}
\begin{document}
\title{Some characterizations of slice regular Lipschitz type spaces}
\small{
\author
{Jos\'e Oscar Gonz\'alez-Cervantes$^{(1)}$, Daniel Gonz\'alez-Campos$^{(2)}$,\\ Juan Bory-Reyes$^{(3)\footnote{corresponding author}}$}
\date{\small {$^{(1)}$ ESFM-Instituto Polit\'ecnico Nacional. 07338, Ciudad M\'exico, M\'exico\\ Email:jogc200678@gmail.com\\
$^{(2)}$ SEPI-ESFM-Instituto Polit\'ecnico Nacional. 07338, Ciudad M\'exico, M\'exico\\Email: daniel\_uz13@hotmail.com\\
$^{(3)}$ SEPI-ESIME-Zacatenco-Instituto Polit\'ecnico Nacional. 07338, Ciudad M\'exico, M\'exico\\Email:juanboryreyes@yahoo.com}}

\maketitle

\begin{abstract}
We give some characterizations of Lipschitz type spaces of slice regular functions in the unit ball of the skew field of quaternions with prescribed modulus of continuity.
\end{abstract}

\noindent
\textbf{Keywords.} Quaternionic slice regular functions; Lipschitz type spaces; Schwarz-Pick lemma; Equivalent norms.\\
\textbf{AMS Subject Classification (2020):} 30G30; 30G35; 35R11; 51F30.
 
\section{Introduction}
The quaternionic valued functions of a quaternionic variable, often referred to as slice regular functions, was born in \cite{GS1,GS2}. This class of functions, which would somehow resemble the classical theory of holomorphic functions of one complex variable, has been studied extensively in the last years, see \cite{ACS, CSS, CGSS,CSS2, GS, GenSS} and the references given there.
		 
It was shown in \cite{D,P} some characterization of generalized Lipschitz type spaces of holomorphic functions with prescribed behavior near the
unit circle centered at the origin, determined by a regular majorant in terms of the moduli of their members. Rather surprisingly, several authors attempted to extend the aforementioned characterizations to (holomorphic) smoothness spaces of both complex and vector-valued functions, see \cite{D1, D2, P1, DP, BKR, HS} and the references therein.

This paper is devoted to establish some analogous results of Dyakonov's paper \cite{D} for the theory of slice regular quaternionic functions. The main results are Proposition 3.5 and Corollary 3.15 of Proposition 3.14 as well as Corollary 3.18 of Proposition 3.17.  

\section{Preliminaries}\label{pre}
\subsection{Antecedents in standard complex analysis}
For the convenience of the reader, we recall the relevant material from \cite{D,P} and provide some additional notations and terminology, thus making our exposition self-contained.

Let $\mathbb D$ stand for the unit disc in the complex plane $\mathbb C$, $\mathbb S^1$ be the unit circle and $\overline{\mathbb D}:=\mathbb D\cup \mathbb S^1$. The algebra $Hol(\mathbb D)$ consists of those holomorphic functions on $\mathbb D$ that are continuous up to $\mathbb S^1$.

A continuous function $\omega: [0, 2] \rightarrow \mathbb R_{+}$ with $\omega(0)=0$ will be called a regular majorant if $\omega(t)$ is increasing, $\displaystyle\frac{\omega(t)}{t}$ is decreasing for $t\in [0, 2]$ and such that 
$$ \int_0^x \frac{\omega(t)}{ t} dt + x \int_x^2 \frac{\omega(t)}{ t^2}dt \leq C\omega(x), \quad  0<x<2.$$
Here and subsequently $C$ stands for a positive real constant, not necessarily the same at each occurrence. When necessary, we will use subscripts to differentiate several constants.

Given a regular majorant the Lipschitz type space, denoted by $\Lambda_\omega (\mathbb D)$, consists (by definition) of all complex valued functions $f$ defined on $\mathbb D$ such that 
$$|f(z)-f(\zeta)| \leq C \omega (|z-\zeta|) , \quad  \forall  z,\zeta \in \mathbb D.$$
The class $\Lambda_{\omega} (\mathbb S^1)$, is defined similarly.

Let us state the main results of \cite{D} as Theorems A and B, the proofs of which were considerably shorted in \cite{P}.
 
Theorem A. Let $\omega$ be a regular majorant. A function $f$ holomorphic in $\mathbb D$ is in $\Lambda_{\omega} (\mathbb D)$ if and only if so is its modulus $|f|$.

If $f\in Hol(\mathbb D)$, then $|f|$ is a subharmonic function, hence the Poisson integral of $|f|$, denoted by $P[|f|]$, is equal to the smallest harmonic majorant in $\mathbb D$. In particular, $P[|f|]-|f|\geq 0$ in $\mathbb D$.

Theorem B. Let $\omega$ be a regular majorant, $f\in Hol(\mathbb D)$, and assume the boundary function of $|f|$ belongs to $\Lambda_{\omega}(\mathbb S^1)$. Then $f$ is in $\Lambda_w (\mathbb D)$ if and only if
$$P[|f|](z)-|f(z)| \leq C \omega (1-|z|).$$

The following notation will be needed
\begin{align*}
\| f\|_{\Lambda_{\omega} (\mathbb D)} = \sup\{ \frac{|f(z)- f(\zeta)|}{\omega(|z-\zeta|)} \  \mid \ z, \zeta\in \mathbb D, \ z\neq \zeta\}, \quad \forall f\in C(\overline{\mathbb D}, \mathbb C).
\end{align*}  
The notation ${\mathfrak A} \asymp {\mathfrak B}$ means that there exist positive constants $C_1$ and $C_2$ such that $C_1 {\mathfrak A} \leq {\mathfrak B} \leq C_2 {\mathfrak A}$.

Let $\omega$ be a majorant and $f\in Hol(\mathbb D) \cap  C(\overline{\mathbb D}, \mathbb C)$. We introduce the following notations:
\begin{align*}
N_1(f) := &  \|  \  |f| \  \|_{\Lambda_{\omega} (\mathbb S^1)} + \sup\{   \frac{ P[|f|] (z) - |f|(z)  }{ \omega( 1- |z|) } \  \mid \ z\in \mathbb D\}, \\
N_2(f) := &  \|  \  |f| \  \|_{\Lambda_{\omega} (\mathbb S^1)} + \sup\{   \frac{ | \ |f|  (\zeta) - |f|(r \zeta ) \ | }{ \omega( 1- r) } \  \mid \ \zeta\in \mathbb S^1, \  0\leq r < 1 \} ,\\
N_3(f) := &  \|  \  |f| \  \|_{\Lambda_{\omega} ( \overline{\mathbb D} )}.
\end{align*}
In particular, we have: 
\begin{enumerate}
\item  If $\omega$ and $\omega^2$ are regular majorants then 
\begin{align}\label{Result1OfD}
\|f\|_{\Lambda_{\omega}(\mathbb D)} \asymp \sup\{\frac{\left\{ P[|f|^2] (z) - |f(z) |^2\right\}^{\frac{1}{2} } 
  }{ \omega( 1- |z|) } \  \mid \ z\in \mathbb D\}.
\end{align} 
\item If $\omega$ is a regular majorant then 
\begin{align}\label{Result2OfD}
\|f\|_{\Lambda_{\omega}(\mathbb D)} \asymp N_1(f) \asymp N_2(f) \asymp N_3(f),
\end{align} 
\end{enumerate} 
for any $f\in Hol(\mathbb D) \cap C(\overline{\mathbb D}, \mathbb C)$.

\subsection{Brief introduction to  slice regular functions}
A quaternion is given by $q=x_0  + x_{1} {e_1} +x_{2} e_2 + x_{3} e_3$ where $x_0, x_1, x_2, x_3$ are real values and  the  imaginary units satisfy:  
$e_1^2=e_2^2=e_3^2=-1$,  $e_1e_2=-e_2e_1=e_3$, $e_2e_3=-e_3e_2=e_1$, $e_3e_1=-e_1e_3=e_2$. The skew field of quaternions is denoted by $\mathbb H$. The sets $\{e_1,e_2,e_3\}$ and $\{1,e_1,e_2,e_3\}$ are called the standard basis of $\mathbb R^3$ and $\mathbb H$, respectively. 
The vector part of $q\in \mathbb H$ is  ${\bf{q}}= x_{1} {e_1} +x_{2} e_2 + x_{3} e_3$ and its real part is $q_0=x_0$.  
The  quaternionic conjugation  of $q$  is   $\bar q=q_0-{\bf q} $ and its  norm  is  $\|q\|:=\sqrt{x_0^2 +x_1^2+x_2^3+x_3^2}= \sqrt{q\bar q} = \sqrt{\bar q  q}$.

By abuse of notation, the unit open ball in $\mathbb H$ will be denoted by $\mathbb D^4:=\{q \in \mathbb H\ \mid \ \|q\|<1 \}$ so will the unit spheres in $\mathbb R^3$ (in $\mathbb H$) by $\mathbb{S}^2:=\{{\bf q}\in\mathbb R^3  \mid \|{\bf q}\| =1\}$ ($\mathbb{S}^3:=\{{q}\in\mathbb H \mid  \|{q}\| =1\}$), respectively.
   
The quaternionic structure allows us to see that  ${\bf i}^{2}=-1$, for every ${\bf i}\in \mathbb{S}^2$. Then $\mathbb{C}({\bf i}):=\{x+{\bf i}y; \ |\ x,y\in\mathbb{R}\}\cong \mathbb C$ as fields, and any $q\in \mathbb H \setminus \mathbb R$ may be rewritten by $x+ {\bf I}_q y $ where $x, y\in \mathbb R$ and ${\bf I}_q:=\| {\bf q}\|^{-1}{\bf q}\in \mathbb S^2$; i.e., $q\in \mathbb{C}({{\bf I}_q})$. Note that $q\in \mathbb R$ belongs to every complex plane.

Given $u\in \mathbb S^3$, the mapping ${\bf q} \mapsto u{\bf q}\bar u$ for all ${\bf q}\in \mathbb R^3$ is a quaternionic rotation  that preserves $\mathbb R^3$, see \cite{HJ}. For any ${\bf i} \in \mathbb S^2$ we will write 
$${\mathbb D}_{\bf i}:= \mathbb D^4 \cap \mathbb{C}({\bf i})$$ 
and 
$${\mathbb S}_{\bf i}:= \mathbb S^2 \cap \mathbb{C}({\bf i}).$$ 
Now, we recall few aspects of the slice regular functions theory of \cite{CGS3, CGSS, CSS, CSS2, GenSS}.
\begin{defn}
Let $\Omega\subset\mathbb H$ be an open domain. A real differentiable function $f:\Omega\to \mathbb{H}$ is called (left) slice regular function on $\Omega$ if   
\begin{align*}
\overline{\partial}_{{\bf i}}f\mid_{_{\Omega\cap \mathbb C({\bf i})}}:=\frac{1}{2}\left (\frac{\partial}{\partial x}+{\bf i} \frac{\partial}{\partial y}\right )f\mid_{_{\Omega\cap \mathbb C({\bf i})}}=0 \ \  \textrm{on $\Omega_{\bf i}:=\Omega\cap \mathbb C({\bf i})$,}
\end{align*}
for all ${\bf i}\in \mathbb{S}^2$ and its derivative, or Cullen derivative, see \cite{GS1}, is given by  
$$f'=\displaystyle {\partial}_{{\bf i}}f\mid_{_{\Omega\cap \mathbb C({\bf i})}} = \frac{\partial}{\partial x} f\mid_{_{\Omega\cap \mathbb C({\bf i})}}= \partial_xf\mid_{_{\Omega\cap \mathbb C({\bf i})}}.$$
\end{defn}
Let $\mathcal{SR}(\Omega)$ denote the right linear space of slice regular functions on $\Omega$.

\begin{defn}
A set $U\subset\mathbb H$ is called axially symmetric if $x+{\bf i}y \in U$ with $x,y\in\mathbb R$, then $\{x+{\bf j}y \ \mid  \ {\bf j}\in\mathbb{S}^2\}\subset U$ and $U\cap \mathbb R\neq \emptyset$. 
A domain $U\subset\mathbb H$ is called slice domain, or s-domain, if $U_{\bf i} = U\cap \mathbb C({\bf i})$ is a domain in $\mathbb C({\bf i})$  for all ${\bf i}\in\mathbb S^2$.
\end{defn}
Let $\Omega\subset\mathbb H$ an axially symmetric s-domain. A function $f\in \mathcal{SR}(\Omega)$ is said to be intrinsic if $f(q)=\overline{f(\bar q)}$ for all $q\in \Omega$. The real linear space of intrinsic slice regular functions on $\Omega$ will be denoted by $\mathcal N(\Omega)$, see \cite{CGS3,GS}. We will denote by $Z_f$ the set of zeroes of function $f$.
\begin{thm}\label{1.3}
Let $\Omega \subset\mathbb{H}$ be an axially symmetric s-domain and  $f\in\mathcal{SR}(\Omega)$.
\begin{enumerate}
\item (Splitting Property) For every ${\bf i}, {\bf j}\in \mathbb{S}$, orthogonal to  each other, there exist holomorphic functions $F, G:\Omega_{\bf i} \rightarrow \mathbb{C}({{\bf i}})$ such that $f_{\mid_{\Omega_{\bf i}}} =F +G {\bf j}$ on $\Omega_{\bf i}$, see \cite{CSS}.
\item (Representation Formula) For every  $q=x+{\bf I}_q y \in \Omega$ with $x,y\in\mathbb R$ and  ${\bf I}_q \in \mathbb S^2$ the following identity holds 
\begin{align*}
f(x+{\bf I}_q y) = \frac {1}{2}[   f(x+{\bf i}y)+ f(x-{\bf i}y)]
+ \frac {1}{2} {\bf I}_q {\bf i}[ f(x-{\bf i}y)- f(x+{\bf i}y)],
\end{align*} 
for all ${\bf i}\in \mathbb S^2$, see  \cite{CGSS}.
\end{enumerate}
\end{thm}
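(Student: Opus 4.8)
The plan is to derive the Representation Formula from the Splitting Property together with the classical identity theorem applied slice by slice, so I would prove the two items in that order. For the Splitting Property, fix ${\bf i},{\bf j}\in\mathbb S^2$ orthogonal to each other and recall the elementary quaternionic facts that ${\bf i}{\bf j}=-{\bf j}{\bf i}$ and that ${\bf i}{\bf j}$ is again a unit imaginary quaternion, orthogonal to both ${\bf i}$ and ${\bf j}$; hence $\{1,{\bf i},{\bf j},{\bf i}{\bf j}\}$ is an orthonormal $\mathbb R$--basis of $\mathbb H$ and $\mathbb H=\mathbb C({\bf i})\oplus\mathbb C({\bf i}){\bf j}$ as a right $\mathbb C({\bf i})$--module. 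Decomposing accordingly $f_{\mid_{\Omega_{\bf i}}}=F+G{\bf j}$ with $F,G:\Omega_{\bf i}\to\mathbb C({\bf i})$ real differentiable, I would substitute into $\overline{\partial}_{\bf i}f\mid_{\Omega_{\bf i}}=0$; using that $\mathbb C({\bf i})$ is commutative (so ${\bf i}$ passes through the coefficients) and that right multiplication by ${\bf j}$ commutes with $\partial_x,\partial_y$, this becomes
\[
\tfrac12\bigl(\partial_xF+{\bf i}\,\partial_yF\bigr)+\tfrac12\bigl(\partial_xG+{\bf i}\,\partial_yG\bigr){\bf j}=0 .
\]
Since the two summands lie in the complementary real subspaces $\mathbb C({\bf i})$ and $\mathbb C({\bf i}){\bf j}$, each vanishes, and $\partial_xF+{\bf i}\partial_yF=0$, $\partial_xG+{\bf i}\partial_yG=0$ are precisely the Cauchy--Riemann equations in the variable $z=x+{\bf i}y$; thus $F$ and $G$ are holomorphic on $\Omega_{\bf i}$.

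For the Representation Formula, fix ${\bf i}\in\mathbb S^2$ and let $g:\Omega\to\mathbb H$ be the function defined by the right--hand side of the claimed identity. I would proceed in four steps. First, $g$ is well defined: at a real point ($y=0$) the factor $f(x-{\bf i}y)-f(x+{\bf i}y)$ vanishes, which kills the ambiguity in ${\bf I}_q$, and $(x,y)\mapsto f(x\pm{\bf i}y)$ is real differentiable because $f$ is. Second, substituting ${\bf I}_q=\pm{\bf i}$ and using ${\bf i}^2=-1$ collapses the formula to $f(x\pm{\bf i}y)$, so $g=f$ on $\Omega_{\bf i}$, and also $g(x)=f(x)$ for $x\in\Omega\cap\mathbb R$. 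Third, $g\in\mathcal{SR}(\Omega)$: for an arbitrary ${\bf j}\in\mathbb S^2$ I would parametrize $\Omega_{\bf j}$ by $\{x+{\bf j}t:\ t\in\mathbb R\}$, put $A(x,t)=f(x+{\bf i}t)$, $B(x,t)=f(x-{\bf i}t)$, and use that slice regularity of $f$ on $\Omega_{\bf i}$ gives $\partial_tA={\bf i}\,\partial_xA$ and $\partial_tB=-{\bf i}\,\partial_xB$; applying $\partial_x+{\bf j}\partial_t$ to $\tfrac12(A+B)+\tfrac12{\bf j}{\bf i}(B-A)$ and simplifying with ${\bf i}^2={\bf j}^2=-1$, all terms cancel, so $\overline{\partial}_{\bf j}g\mid_{\Omega_{\bf j}}=0$. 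Fourth, for any ${\bf J}\in\mathbb S^2$ both $f\mid_{\Omega_{\bf J}}$ and $g\mid_{\Omega_{\bf J}}$ are holomorphic (annihilated by $\overline{\partial}_{\bf J}$) on the \emph{connected} open set $\Omega_{\bf J}$ --- connectedness being exactly what the s-domain hypothesis provides --- and they coincide on $\Omega\cap\mathbb R$, which is a nonempty open subset of $\mathbb R$ by axial symmetry and hence has accumulation points in $\Omega_{\bf J}$; the one--variable identity theorem (applied componentwise via the Splitting Property) then forces $f=g$ on $\Omega_{\bf J}$. Since ${\bf J}$ is arbitrary, $f=g$ on $\Omega$; and since the whole argument is valid for every ${\bf i}\in\mathbb S^2$, the formula holds for all ${\bf i}$.

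I expect the only genuinely delicate point to be the third step: verifying $\overline{\partial}_{\bf j}g\mid_{\Omega_{\bf j}}=0$ is a short but careful computation that requires tracking the non--commuting products ${\bf j}{\bf i}$, ${\bf i}^2$, ${\bf j}^2$ as the operator $\partial_x+{\bf j}\partial_t$ is moved past the factor ${\bf j}{\bf i}$, and one must invoke the slice--regularity relations for $f$ \emph{only} on $\Omega_{\bf i}$, never assuming anything about $f$ off that slice. All remaining steps are either the linear--algebra observation about the basis $\{1,{\bf i},{\bf j},{\bf i}{\bf j}\}$ or a direct appeal to the classical identity theorem.
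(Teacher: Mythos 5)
This theorem is quoted in the paper as background, with no proof given: the Splitting Property is cited from \cite{CSS} and the Representation Formula from \cite{CGSS}. Your argument is correct and is essentially a reconstruction of the standard proofs in those references: the splitting via the decomposition $\mathbb H=\mathbb C({\bf i})\oplus\mathbb C({\bf i}){\bf j}$, which turns $\overline{\partial}_{\bf i}f=0$ into the Cauchy--Riemann equations for the two $\mathbb C({\bf i})$-valued components; and the representation formula by defining the candidate function $g$ from the right-hand side, checking $\overline{\partial}_{\bf j}g=0$ on every slice, and invoking the one-variable identity theorem slice by slice through the real points (this is exactly the identity-principle mechanism used in \cite{CGSS}). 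Your computation in the third step is right: with $\partial_tA={\bf i}\,\partial_xA$ and $\partial_tB=-{\bf i}\,\partial_xB$ the terms of $(\partial_x+{\bf j}\partial_t)g$ cancel, the constant left factor $\tfrac12{\bf j}{\bf i}$ causing no trouble. Two small points you leave implicit but should state: axial symmetry is what guarantees that $x\pm{\bf i}t\in\Omega$ whenever $x+{\bf j}t\in\Omega$, so that $A$ and $B$ are defined on all of $\Omega_{\bf j}$; and the well-definedness of $g$ rests not only on the vanishing at real points but on the invariance of the formula under the substitution $({\bf I}_q,y)\mapsto(-{\bf I}_q,-y)$, which is also what makes your parametrization of $\Omega_{\bf j}$ with $t$ of either sign consistent. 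Neither is a gap, just a line each worth adding.
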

In the case of $\Omega={\mathbb D^4}$ and given $f, g\in \mathcal {SR}(\mathbb D^4)$ there exist two sequences of quaternions $(a_n)$ and $(b_n)$ such that 
$$f(q) = \sum_{n=0}^{\infty} q^n a_n,\  g(q) = \sum_{n=0}^{\infty} q^n b_n.$$ 
The product $f*g$ is defined as $f*g(q):=\sum_{n=0}^{\infty} q^n \sum_{k=0}^n  a_k b_{n-k}$ for all $q\in {\mathbb D^4}$. 

For $f(q)\neq 0$ the following property holds 
\begin{align*} 
 f * g(q)= f(q)g(f(q)^{-1} qf(q)),
\end{align*}
see \cite{CSS}. What is more, if $f^s$ has no zeroes, the $*$-inverse of $f$ is given by 
$$f^{-*} =\displaystyle \frac{1}{f^s} * f^c$$ 
and 
$$(f^{-*})'= - f^{-*} * f'* f^{-*},$$ 
where $f^c(q):= \sum_{n=0}^{\infty} q^n \overline{a_n}$ for all $q\in {\mathbb D^4}$ and $f^s :=  f* f^c =  f^c * f$, see \cite{csTrends, CSS}.

\section{Main results}
\begin{defn}\label{def1} Let $\omega$ be a regular majorant and ${\bf i} \in \mathbb S^2$. The set of all functions $f:\mathbb D^4 \to \mathbb H$ such that  
$$ \|f(x)-f(y)\| \leq C \omega (\| x - y \|) , \quad  \forall  x,y \in {\mathbb D}_{\bf i}$$
will be denoted by ${}_{\bf i} \Lambda_{\omega}(\mathbb D^4).$
 
We write ${}_{\bf i} \Lambda_{\omega}(\mathbb S^3)$ for the set of all functions $f:\mathbb S^3 \to \mathbb H$ such that  
$$ \|f(x)-f(y)\| \leq C \omega (\| x - y \|) , \quad  \forall  x,y \in {\mathbb S}_{\bf i}.$$
The norm of a function $f\in {}_{\bf i}\Lambda_\omega(\mathbb D^4)$ is defined as 
\begin{align*}
\|f\|_{ {}_{\bf i}\Lambda_\omega(\mathbb D^4)} = \sup\{\frac{  \|f(x)- f(y) \|  }{ \omega(\|x-y\|)}  \      \mid  \   x,y \in \mathbb D_{\bf i}, \  x\neq y\}.
\end{align*}
\end{defn}

\begin{defn}\label{def2}
Let $\omega_1 ,\omega_2$ be regular majorants and ${\bf i}, {\bf j} \in \mathbb S^2$ orthogonal to each other. We write ${}_{\bf i} \Lambda_{\omega_1, \omega_2}(\mathbb D^4)$ for the set of all functions $f:\mathbb D^4 \to \mathbb H$ such that    
$$\|f_k(x)-f_k(y)\| \leq C_k \omega_k (\| x - y \|) , \quad  \forall  x,y \in {\mathbb D}_{\bf i},$$
for $k=1,2$, where $f\mid_{\mathbb D_{\bf i}} = f_1+f_2 {\bf j}$ with $f_1,f_2:{\mathbb D}_{{\bf i}}\to \mathbb C({\bf i})$.  The set  
${}_{\bf i} \Lambda_{\omega_1, \omega_2}(\mathbb S^3)$ consists of all $f:\mathbb S^3\to \mathbb H$ such that    
$$\|f_k(x)-f_k(y)\| \leq C_k \omega_k (\| x - y \|) , \quad  \forall  x,y \in {\mathbb S}_{\bf i},$$
for $k=1,2$, where $f\mid_{\mathbb D_{\bf i}} = f_1+f_2 {\bf j}$ with $f_1,f_2:{\mathbb S}_{{\bf i}}\to \mathbb C({\bf i})
$. For $f\in {}_{\bf i}\Lambda_{\omega_1, \omega_2} (\mathbb D^4)$ we define 
\begin{align*}
\|f\|_{ {}_{\bf i}\Lambda_{\omega_1, \omega_2}(\mathbb D^4)}^2 = \sup\{    \frac{  \|f_1(x)- f_1(y) \|^2  }{ \omega_1(\|x-y\|)^2} +  \frac{  \|f_2(x)- f_2(y) \|^2  }{ \omega_2(\|x-y\|)^2}  \      \mid  \   x,y \in \mathbb D_{\bf i}, \  x\neq y\}.
\end{align*}
Given  ${\bf i}\in \mathbb S^2$, the ${\bf i}$-Poisson integral of  $u\in C({\mathbb S}_{\bf i}, \mathbb R) $ is
$$P_{{\bf i}}[u](q) =\frac{1}{2\pi} \int_0^{2\pi} u(e^{{\bf i} t}) \frac{ 1 -\|q\|^2}{ \|q- e^{{\bf i} t} \|^2} dt, \quad q\in \mathbb D^4.$$
\end{defn}

\begin{rem}
\begin{enumerate}
\item  Let $f\in  \mathbb D^4 \to \mathbb H$ and $f=f_1+f_2{\bf j}$ on ${\mathbb D}_{\bf i}$ with $f_1, f_2: {\mathbb D}_{\bf i} \to {\mathbb D}_{\bf i} $. Then   
\begin{align*}
2f_1 = f-{\bf i}f {\bf i }, \ \  2f_2 {\bf j} = f+{\bf i}f {\bf i }, \  \ \textrm{on } \ {\mathbb D}_{\bf i},
\end{align*}
where ${\bf i}, {\bf j}\in \mathbb S^2$ are orthogonal to each other. 

If ${\bf j}'$ is another orthogonal vector to ${\bf i}$ and $f=g_1+g_2{\bf j}'$ on ${\mathbb D}_{\bf i}$ then 
  $f_1=g_1$, $f_2  =- g_2  {\bf j}'\overline{ {\bf j}} $ and due to the usage of the quaternionic norm in the previous definitions we see that theses do not depends of the choose of ${\bf j}$, since 
$$\|f_2(x)-f_2(y)\|= \|g_2(x)-g_2(y)\|, \quad \forall x,y\in\mathbb D_{\bf i}.$$
\item Let $\omega, \omega_1 ,\omega_2$ regular majorants and $f:\mathbb D^4 \to \mathbb H$  with  $f\mid_{\mathbb D_{\bf i}} = f_1+f_2{\bf j},$ where $f_1,f_2:{\mathbb D}_{{\bf i}}\to \mathbb C({\bf i})$, with ${\bf i}, {\bf j} \in \mathbb S^2$ are orthogonal to each other. Due to inequalities  
\begin{align*}
 \left. \begin{array}{l} \|f_1(x)-f_1(y)\| \\ \|f_2(x)-f_2(y)\|\end{array}  \right\} \leq \|f(x)-f(y)\| \leq \|f_1(x)-f_1(y)\|  + \|f_2(x)-f_2(y)\| , 
\end{align*}
for all $x,y\in \mathbb D_{\bf i}$, we get that  
$${}_{\bf i} \Lambda_{\omega , \omega }(\mathbb D^4) =  {}_{\bf i} \Lambda_{\omega}(\mathbb D^4)$$ 
and    
$${}_{\bf i} \Lambda_{\omega_1, \omega_2}(\mathbb D^4) \subset   {}_{\bf i} \Lambda_{\omega_1+ \omega_2}(\mathbb D^4).$$  
Similar relationships are obtained for ${}_{\bf i} \Lambda_{\omega}(\mathbb S^3)$ and ${}_{\bf i} \Lambda_{\omega_1, \omega_2}(\mathbb S^3)$.
\end{enumerate}
\end{rem} 

\begin{defn} \label{def3} The symbol $G \Lambda_{\omega}(\mathbb D^4)$ stands for the set of all quaternionic-valued functions $f$ defined on $\mathbb D^4$ such that  
$$\|f(x)-f(y)\| \leq C \omega (\|x- y \|) , \quad  \forall  x,y \in  {\mathbb D}^4.$$
Meanwhile, $G \Lambda_{\omega}(\mathbb S^3)$ denotes the set of all quaternionic-valued functions $f$ defined on $\mathbb S^3$ such that  
$$ \|f(x)-f(y)\| \leq C \omega (\|x- y\|) , \quad  \forall  x,y \in {\mathbb S}^3.$$
\end{defn}

\begin{prop}\label{cor111} Let $\omega_1 ,\omega_2$ regular majorants and ${\bf i} \in \mathbb S^2$. Then     
$$  {}_{\bf i} \Lambda_{\omega_1, \omega_2}(\mathbb D^4)\cap \mathcal {SR}(\mathbb D^4) \  \subset \   G \Lambda_{\omega_1+ \omega_2}(\mathbb D^4)\cap \mathcal {SR}(\mathbb D^4) \ \subset \ {}_{\bf i} \Lambda_{\omega_1 + \omega_2}(\mathbb D^4)\cap \mathcal {SR}(\mathbb D^4) .$$ 
\end{prop}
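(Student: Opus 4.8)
The plan is to establish the two inclusions separately. The right-hand one is immediate: a function in $G\Lambda_{\omega_1+\omega_2}(\mathbb D^4)$ satisfies $\|f(x)-f(y)\|\le C(\omega_1+\omega_2)(\|x-y\|)$ for \emph{all} $x,y\in\mathbb D^4$, in particular whenever $x,y\in\mathbb D_{\bf i}$, so it lies in ${}_{\bf i}\Lambda_{\omega_1+\omega_2}(\mathbb D^4)$, and intersecting with $\mathcal{SR}(\mathbb D^4)$ is harmless. For the left-hand inclusion, fix $f\in{}_{\bf i}\Lambda_{\omega_1,\omega_2}(\mathbb D^4)\cap\mathcal{SR}(\mathbb D^4)$ with $f|_{\mathbb D_{\bf i}}=f_1+f_2{\bf j}$. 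First I would note that on the single slice $\mathbb D_{\bf i}$ the estimate is already available: the inequality $\|f(x)-f(y)\|\le\|f_1(x)-f_1(y)\|+\|f_2(x)-f_2(y)\|$ together with the hypothesis gives $\|f(x)-f(y)\|\le C(\omega_1+\omega_2)(\|x-y\|)$ for $x,y\in\mathbb D_{\bf i}$, with $C=\max\{C_1,C_2\}$. Everything then reduces to transporting this slice estimate to arbitrary points of $\mathbb D^4$ by means of the Representation Formula.

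To do so I would take arbitrary $p,q\in\mathbb D^4$, write $p=a+{\bf I}_pb$ and $q=c+{\bf I}_qd$ with $a,c\in\mathbb R$, $b,d\ge0$, and (interchanging $p,q$ if necessary) assume $d\le b$; if $b=0$ both points are real and lie in $\mathbb D_{\bf i}$, so the slice estimate applies directly and we may assume $b>0$. Applying Theorem~\ref{1.3} to $f(p)$ and to $f(q)$ with the same ${\bf i}$ and rearranging gives
\begin{align*}
f(p)-f(q)&=\tfrac12(1-{\bf I}_p{\bf i})\bigl[f(a+{\bf i}b)-f(c+{\bf i}d)\bigr]+\tfrac12(1+{\bf I}_p{\bf i})\bigl[f(a-{\bf i}b)-f(c-{\bf i}d)\bigr]\\
&\quad+\tfrac12({\bf I}_q-{\bf I}_p){\bf i}\bigl[f(c+{\bf i}d)-f(c-{\bf i}d)\bigr].
\end{align*}
Since $\|1\pm{\bf I}_p{\bf i}\|\le2$ and $\|{\bf i}\|=1$, taking quaternionic norms bounds $\|f(p)-f(q)\|$ by $\|f(a+{\bf i}b)-f(c+{\bf i}d)\|+\|f(a-{\bf i}b)-f(c-{\bf i}d)\|+\tfrac12\|{\bf I}_p-{\bf I}_q\|\,\|f(c+{\bf i}d)-f(c-{\bf i}d)\|$. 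The four points $a\pm{\bf i}b$, $c\pm{\bf i}d$ belong to $\mathbb D_{\bf i}$, and the elementary quaternionic identity $\|p-q\|^2=(a-c)^2+(b-d)^2+bd\,\|{\bf I}_p-{\bf I}_q\|^2$ shows $\|(a\pm{\bf i}b)-(c\pm{\bf i}d)\|\le\|p-q\|$; so, by the slice estimate and the monotonicity of $\omega_1,\omega_2$, the first two terms are each at most $C(\omega_1+\omega_2)(\|p-q\|)$.

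The remaining ``rotation mismatch'' term is where I expect the main difficulty, and it is handled by a dichotomy based on the same identity. The slice estimate gives $\|f(c+{\bf i}d)-f(c-{\bf i}d)\|\le C(\omega_1+\omega_2)(2d)$. If $2d\le\|p-q\|$, then $\|{\bf I}_p-{\bf I}_q\|\le2$ and monotonicity make the term $\le C(\omega_1+\omega_2)(\|p-q\|)$. If $2d>\|p-q\|$ (in particular $d>0$), the identity yields $\|{\bf I}_p-{\bf I}_q\|\le\|p-q\|/\sqrt{bd}\le\|p-q\|/d$ since $d\le b$, so the term is at most $\tfrac12\|p-q\|\bigl(C_1\,\omega_1(2d)/d+C_2\,\omega_2(2d)/d\bigr)$; since each $t\mapsto\omega_k(t)/t$ is decreasing and $\|p-q\|<2d$, one has $\omega_k(2d)/d=2\,\omega_k(2d)/(2d)\le2\,\omega_k(\|p-q\|)/\|p-q\|$, so this term is again $\le C(\omega_1+\omega_2)(\|p-q\|)$. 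Summing the three contributions gives $\|f(p)-f(q)\|\le3C(\omega_1+\omega_2)(\|p-q\|)$ for all $p,q\in\mathbb D^4$, i.e. $f\in G\Lambda_{\omega_1+\omega_2}(\mathbb D^4)$, while $f\in\mathcal{SR}(\mathbb D^4)$ by hypothesis, completing the proof. Note that $\omega_1+\omega_2$ is again a regular majorant, and that only the monotonicity of $\omega_k$ and of $\omega_k(t)/t$ — not the integral condition — is used here.
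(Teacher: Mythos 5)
Your proof is correct and follows essentially the same route as the paper's: both reduce to the slice estimate via the Splitting Property and transport it with the Representation Formula, isolating the same rotation-mismatch term $({\bf I}_q-{\bf I}_p){\bf i}\,[f(c+{\bf i}d)-f(c-{\bf i}d)]$ and handling it by the same dichotomy ($2d$ versus $\|p-q\|$) using the monotonicity of $\omega_k$ and the decrease of $\omega_k(t)/t$. The only cosmetic differences are that you bound $\|{\bf I}_p-{\bf I}_q\|$ via the exact identity $\|p-q\|^2=(a-c)^2+(b-d)^2+bd\,\|{\bf I}_p-{\bf I}_q\|^2$ (with the harmless normalization $d\le b$), where the paper uses a triangle-inequality estimate, and that you absorb the paper's separate case of one point being real into the general computation, arriving at a slightly better constant.
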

\begin{proof} The relationship  $   G \Lambda_{\omega_1+ \omega_2}(\mathbb D^4)\cap \mathcal {SR}(\mathbb D^4)\subset   {}_{\bf i} \Lambda_{\omega_1 +\omega_2}(\mathbb D^4)\cap \mathcal {SR}(\mathbb D^4) $ is a direct consequence of Definition \ref{def3}.

On the other hand, we shall see that given $f\in {}_{\bf i} \Lambda_{\omega_1, \omega_2}(\mathbb D^4)\cap \mathcal {SR}(\mathbb D^4)$ there exists a constant $L>0$ such that  
\begin{align*}
 \|f (p)-f (q)\|\leq L \sum_{k=1}^2    \omega_k (\|p-q\|) , \quad \forall p,q\in \mathbb D^4.
\end{align*}
Given $ p,q\in \mathbb D^4$ consider the following  cases:
\begin{enumerate}
\item Suppose that ${\bf p}$ and ${\bf q}$ are both the zero vector. By the Splitting Property we get 
\begin{align*}
 \|f (p)-f (q)\|\leq  & \|f_1 (p)-f_1 (q)\| + \|f_2 (p)-f_2 (q)\|  \\
               \leq  & C_3\left(  \omega_1 (\| p - q \|) +  \omega_2 (\| p - q \|)  \right),	
\end{align*}
where $C_3= \max\{C_1, C_2\}$.          
\item Suppose ${\bf p}$ is not the zero vector while ${\bf q}$ is. Consider $z= p_0 + {\bf i} |{\bf p}|$ and $\zeta= q = \bar \zeta $. Combining the Representation Formula with Splitting Property we obtain     
\begin{align*}
 2\|f (p)-f (q)\|= &\| (1-\frac{{\bf p}}{\|{\bf p}\|} {\bf i}) (f(z)- f(\zeta)) + (1+\frac{{\bf p}}{\|{\bf p}\|} {\bf i}) (f(\bar z ) - f(\bar \zeta))   \| \\
\leq & 2\| f(z)- f(\zeta)\| + 2 \|f(\bar z ) - f(\bar \zeta)   \|  \\
\leq &  2   \sum_{k=1}^2   ( \|   f_k(z)- f_k(\zeta)\| +  \|f_k(\bar z ) - f_k(\bar \zeta)   \|  ) 
\\
 \leq & 4C_3 \left(   \omega_1 (\| p-q  \|) +  \omega_2 (\| p - q \|) \right) . 
		\end{align*}
where $\| z-\zeta  \| = \|p-	q\|$ is used.
\item Consider  $p, q\in \mathbb D^4$ such that neither ${\bf p} $ nor ${\bf q}$ is the zero vector. Set $z= p_0 + {\bf i} |{\bf p}|$
 and $\zeta= q_0 + {\bf i} |{\bf q}|.$ Representation Formula gives 
\begin{align*}
 & \  2\|f (p)-f (q)\| \nonumber  \\
= &\|\left\{ (1-\frac{{\bf p}}{\|{\bf p}\|} {\bf i}) f(z) + (1+\frac{{\bf p}}{\|{\bf p}\|} {\bf i}) f(\bar z)
 - (1-\frac{{\bf q}}{\|{\bf q}\|} {\bf i}) f(\zeta) 
  - (1+\frac{{\bf q}}{\|{\bf q}\|} {\bf i}) f(\bar \zeta)    \right\} \| \nonumber \\
= &		 \|\left\{ (1-\frac{{\bf p}}{\|{\bf p}\|} {\bf i})( f(z)- f(\zeta) ) + (1+\frac{{\bf p}}{\|{\bf p}\|} {\bf i}) (f(\bar z)
   -   f(\bar \zeta)  ) \right. \nonumber  \\
& \left. + (\frac{{\bf p}}{\|{\bf p}\| } {\bf i} -\frac{{\bf q}}{\|{\bf q}\|}   {\bf i} ) ( f(\bar \zeta) -f(\zeta) ) 
       \right\} \|  \nonumber  \\
\leq & 	2 \|  f(z)- f(\zeta)  \|  + 2 \| f(\bar z)
   -   f(\bar \zeta)  \|   + \|\frac{{\bf p}}{\|{\bf p}\| }  -\frac{{\bf q}}{\|{\bf q}\|}  \| \| f(\bar \zeta) -f(\zeta)\|  
        \nonumber   \\
\leq & 	4  C_3
\sum_{k=1}^2  \omega_k (\| z  - \zeta  \|)   + \|\frac{{\bf p}}{\|{\bf p}\| }  -\frac{{\bf q}}{\|{\bf q}\|}  \|  C_3
\sum_{k=1}^2  \omega_k (\| \bar \zeta  - \zeta  \|).
	\end{align*}
 Note that  
$\|z-\zeta\| = \sqrt{ (p_0-q_0)^2 + (\|{\bf p}\|- \|{\bf q} \|)^2  }\leq \|p-q\|$ and as $\omega_1$ and $\omega_2$ 
are increasing functions then   
\begin{align}\label{equa12345}
2\|f (p)-f (q)\|\leq  & 	4 C_3 \left\{  
\sum_{k=1}^2   \left( \omega_k (\|p-q\|)      + \frac{1}{4} \|\frac{{\bf p}}{\|{\bf p}\| }  -\frac{{\bf q}}{\|{\bf q}\|}  \|   
  \omega_k (2  \|{\bf q} \| ) \right) \right\}.
	\end{align}
If $ 2\|{\bf q} \| \leq  \|p-q\| $ then $ \omega_k (2  \|{\bf q} \| ) \leq \omega_k (\|p-q\|)$, for $k=1,2$, and 
$$\displaystyle \|f (p)-f (q)\|\leq  	3 C_3 \sum_{k=1}^2     \omega_k (\|p-q\|).$$

On the other hand, if $\|p-q\| < 2\|{\bf q}\|$, from \eqref{equa12345},  we get 
	\begin{align*}
\frac{ \|f (p)-f (q)\|} {\displaystyle \sum_{k=1}^2    \omega_k (\|p-q\|)  } \leq  & 	2C_3 \left\{  
 1     + \frac{1}{4} \|\frac{{\bf p}}{\|{\bf p}\| }  -\frac{{\bf q}}{\|{\bf q}\|}  \|   
 \frac{\displaystyle   \sum_{k=1}^2 \omega_k (2  \|{\bf q} \| )}{ \displaystyle  \sum_{k=1}^2   \omega_k (\|p-q\|)  }   \right\}\\
  \\
	\leq & 	2C_3 \left\{  
 1     + \frac{1}{4} \|\frac{{\bf p}}{\|{\bf p}\| }  -\frac{{\bf q}}{\|{\bf q}\|}  \|   
 \sum_{k=1}^2 \frac{     \omega_k (2  \|{\bf q} \| )}{     \omega_k (\|p-q\|)  }   \right\}\\
  \\
	\leq & 	2C_3 \left\{  
 1     + \frac{1}{4} \|\dfrac{{\bf p}}{\|{\bf p}\| }  -\dfrac{{\bf q}}{\|{\bf q}\|}  \|  \dfrac{ 2\|{\bf q} \| }{   \|p-q\|} 
 \sum_{k=1}^2  \dfrac{    \dfrac{ \omega_k (2  \|{\bf q} \| )  }{  2  \|{\bf q} \| } }{   \dfrac{  \omega_k (\|p-q\|)} {  \|p-q\| }  }   \right\}\\
	\end{align*}
	As $\dfrac{\omega_k(t)}{t}$ is decreasing, for $k=1,2$, and   $\|p-q\| < 2\|{\bf q} \|$ then 
	\begin{align*}
	\sum_{k=1}^2  \dfrac{    \dfrac{ \omega_k (2  \|{\bf q} \| )  }{  2  \|{\bf q} \| } }{   \dfrac{  \omega_k (\|p-q\|)} {  \|p-q\| }  } \leq 2
\end{align*}
and 	
\begin{align*}
\|f (p)-f (q)\| \leq    & 	2C_3 \left\{  
 1     +    \|\dfrac{{\bf p}}{\|{\bf p}\| }  -\dfrac{{\bf q}}{\|{\bf q}\|}  \|  \dfrac{\|{\bf q} \| }{\|p-q\|}\right\}\displaystyle \sum_{k=1}^2    \omega_k (\|p-q\|).  
\end{align*}
It is easily seen that 
\begin{align*} 
    \|\dfrac{{\bf p}}{\|{\bf p}\| }  -\dfrac{{\bf q}}{\|{\bf q}\|}  \|  \dfrac{\|{\bf q} \| }{   \|p-q\|}   \leq  &    \|\dfrac{{\bf p}}{\|{\bf p}\| } -\dfrac{{\bf p}}{\|{\bf q}\| }  + \dfrac{{\bf p}}{\|{\bf q}\| } -\dfrac{{\bf q}}{\|{\bf q}\|}  \|  \dfrac{\|{\bf q} \| }{   \|  p -  q \|}	\\
				\leq  & \dfrac{|\|{\bf q} \| -\|{\bf p} \| |}{\|{\bf p}\|  \|{\bf q}\| }       \dfrac{\|{\bf p}{\bf q} \| }{   \|p-q\|}	+
		 \dfrac{\|{\bf p} - {\bf q} \| }{   \|p-q\|} 
\leq  2,
\end{align*}
and 
\begin{align*}
  \|f (p)-f (q)\|  \leq    & 	6C_3  \sum_{k=1}^2    \omega_k (\|p-q\|), 
\end{align*}
\end{enumerate}
which completes the proof by choosing $L=6C_3$.
 \end{proof}
\begin{rem} We have proved more, namely that for $\omega_1=\omega_2= \omega$ we are lead to 
$${}_{\bf i} \Lambda_{\omega  }(\mathbb D^4)\cap \mathcal {SR}(\mathbb D^4) =  G \Lambda_{\omega }(\mathbb D^4)\cap \mathcal {SR}(\mathbb D^4).$$  
Therefore, every slice regular function space associated to a majorant on a fix slice, it is also  associated to the same majorant on the four-dimensional unit ball and reciprocally.
\end{rem}
We proceed to describe some algebraic properties of the previously introduced functions sets.
\begin{prop}\label{prop0} 
Set $\bf{i} \in {\mathbb S}^2$. 
\begin{enumerate}
 \item  Given a  regular  majorant $\omega$, the sets ${}_{\bf i} \Lambda_{\omega} (\mathbb D^4) \cap \mathcal {SR}(\mathbb D^4)$ and $G \Lambda_{\omega} ({{\mathbb D^4}})\cap \mathcal {SR} (\mathbb D^4) $ are  quaternionic right linear spaces.
\item Let $\omega_1,\omega_2$ be  two regular majorants and let $f,g\in {}_{\bf i} \Lambda_{\omega_1, \omega_2} (\mathbb D^4)$. For every $a\in \mathbb H$ we have $f+g \in {}_{\bf i} \Lambda_{\omega_1, \omega_2} (\mathbb D^4)$ and 
$$fa \in {}_{\bf i} \Lambda_{\|a_1\|\omega_1 + \|a_2\|\omega_2, \ \|a_2\|\omega_1 + \|a_1\|\omega_2} (\mathbb D^4),$$ 
where $a=a_1+ a_2 {\bf j}$ with $a_1,a_2\in \mathbb C({\bf i})$ and ${\bf j} $ is orthogonal to ${\bf i}$.
\end{enumerate}
\end{prop}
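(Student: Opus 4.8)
The plan is to derive both assertions directly from the definitions, the only tools being the triangle inequality, the multiplicativity $\|pq\|=\|p\|\,\|q\|$ of the quaternionic norm, and the commutation rule ${\bf j}z=\bar z\,{\bf j}$ for $z\in\mathbb C({\bf i})$ that governs the splitting.

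For part (1) I would first invoke the fact, recalled in Section \ref{pre}, that $\mathcal{SR}(\mathbb D^4)$ is a quaternionic right linear space; hence it suffices to check that ${}_{\bf i}\Lambda_\omega(\mathbb D^4)$ and $G\Lambda_\omega(\mathbb D^4)$ are closed under addition and under right multiplication by a quaternion. If $f,g\in{}_{\bf i}\Lambda_\omega(\mathbb D^4)$ have constants $C_f,C_g$, then for $x,y\in\mathbb D_{\bf i}$ the triangle inequality gives $\|(f+g)(x)-(f+g)(y)\|\le(C_f+C_g)\,\omega(\|x-y\|)$; and for $a\in\mathbb H$, multiplicativity of the norm gives $\|(fa)(x)-(fa)(y)\|=\|f(x)-f(y)\|\,\|a\|\le\|a\|C_f\,\omega(\|x-y\|)$. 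The same two estimates, now with $\mathbb D_{\bf i}$ replaced by $\mathbb D^4$, take care of $G\Lambda_\omega(\mathbb D^4)$. Intersecting with $\mathcal{SR}(\mathbb D^4)$ then yields right linear spaces.

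For part (2), write $f|_{\mathbb D_{\bf i}}=f_1+f_2{\bf j}$, $g|_{\mathbb D_{\bf i}}=g_1+g_2{\bf j}$ with $f_k,g_k:\mathbb D_{\bf i}\to\mathbb C({\bf i})$. Then $(f+g)|_{\mathbb D_{\bf i}}=(f_1+g_1)+(f_2+g_2){\bf j}$, and applying the defining bounds componentwise gives at once $f+g\in{}_{\bf i}\Lambda_{\omega_1,\omega_2}(\mathbb D^4)$. For the product $fa$, write $a=a_1+a_2{\bf j}$ with $a_1,a_2\in\mathbb C({\bf i})$; expanding $(f_1+f_2{\bf j})(a_1+a_2{\bf j})$ and using ${\bf j}z=\bar z\,{\bf j}$ together with ${\bf j}^2=-1$ leads to
\begin{align*}
(fa)|_{\mathbb D_{\bf i}}=(f_1a_1-f_2\overline{a_2})+(f_1a_2+f_2\overline{a_1}){\bf j},
\end{align*}
so that $(fa)_1=f_1a_1-f_2\overline{a_2}$ and $(fa)_2=f_1a_2+f_2\overline{a_1}$, which are $\mathbb C({\bf i})$-valued because $\mathbb C({\bf i})$ is a field stable under conjugation.

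It then remains to bound the moduli of continuity of $(fa)_1$ and $(fa)_2$. Using the triangle inequality, $\|\overline{a_k}\|=\|a_k\|$ and $\|f_k(x)-f_k(y)\|\le C_k\,\omega_k(\|x-y\|)$, one gets, with $C_3:=\max\{C_1,C_2\}$,
\begin{align*}
\|(fa)_1(x)-(fa)_1(y)\|&\le C_3\big(\|a_1\|\omega_1(\|x-y\|)+\|a_2\|\omega_2(\|x-y\|)\big),\\
\|(fa)_2(x)-(fa)_2(y)\|&\le C_3\big(\|a_2\|\omega_1(\|x-y\|)+\|a_1\|\omega_2(\|x-y\|)\big),
\end{align*}
for all $x,y\in\mathbb D_{\bf i}$. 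To close the argument I would observe that any combination $\alpha\omega_1+\beta\omega_2$ with $\alpha,\beta\ge0$ is again a regular majorant: continuity and vanishing at the origin are obvious, monotonicity of $\alpha\omega_1+\beta\omega_2$ and of $(\alpha\omega_1+\beta\omega_2)(t)/t$ are inherited term by term, and the defining integral inequality is additive in the majorant. Thus $\|a_1\|\omega_1+\|a_2\|\omega_2$ and $\|a_2\|\omega_1+\|a_1\|\omega_2$ are regular majorants, and the displayed bounds say precisely that $fa\in{}_{\bf i}\Lambda_{\|a_1\|\omega_1+\|a_2\|\omega_2,\ \|a_2\|\omega_1+\|a_1\|\omega_2}(\mathbb D^4)$. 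The only delicate point is the expansion of $(f_1+f_2{\bf j})(a_1+a_2{\bf j})$, where one must correctly track the conjugations produced by commuting ${\bf j}$ past $\mathbb C({\bf i})$ and the sign coming from ${\bf j}^2=-1$; everything else is routine bookkeeping with the triangle inequality.
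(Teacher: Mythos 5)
Your proof is correct and follows essentially the same route as the paper: part (1) by the triangle inequality and the multiplicativity of the quaternionic norm applied to right translates, and part (2) by the same expansion $(fa)|_{\mathbb D_{\bf i}}=(f_1a_1-f_2\overline{a_2})+(f_1a_2+f_2\overline{a_1}){\bf j}$ followed by componentwise estimates. Your extra check that nonnegative combinations $\alpha\omega_1+\beta\omega_2$ are again regular majorants is a small welcome addition the paper leaves implicit.
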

\begin{proof}
\begin{enumerate}
\item Given $f,g\in {}_{\bf i} \Lambda_{\omega}(\mathbb D^4)\cap \mathcal {SR}(\mathbb D^4)$ and $a\in \mathbb H$ we see that    
\begin{align*}
 \|(fa+ g)(x)-(fa+ g)(y)\| \leq & \|a\| \|f(x)-f(y)\| + \|g(x)-g(y)\|\\
  \leq  & 
 C\omega (\| x - y \|) ,\  \   \forall x,y \in {\mathbb D}_{\bf i}.
\end{align*}
Similar inequalities are used to see that $G \Lambda_{\omega}(\mathbb D^4)\cap \mathcal {SR}(\mathbb D^4)$ is a quaternionic right linear space.
\item Given $f,g\in {}_{\bf i} \Lambda_{\omega_1, \omega_2} (\mathbb D^4)$ we have $f+g \in {}_{\bf i} \Lambda_{\omega_1, \omega_2} (\mathbb D^4)$ following a similar computation to the above. Denote $a=a_1+ a_2 {\bf j}$, $f\mid_{\mathbb D_{\bf i}}=f_1+ f_2 {\bf j}$,	$g\mid_{\mathbb D_{\bf i}}=g_1+ g_2 {\bf j}$ with ${\bf j}\in \mathbb S^2$ orthogonal to ${\bf i}$ and $a_1,a_2\in \mathbb C({\bf i})$ and $f_1,f_2,g_1,g_2 \in	Hol(\mathbb D_{\bf i})$.  
We obtain that  
$$fa \mid_{\mathbb D_{\bf i}} = (f_1  a_1 - f_2  \bar a_2 ) + (f_1  a_2 + f_2  \bar a_1){\bf j}$$ 
and 
	\begin{align*}
	  \| (f_1  a_1 - f_2  \bar a_2 )(x)- 
	(f_1  a_1 - f_2  \bar a_2 )(y) \| \leq & C_1 (\omega_1(\|x-y \|) \|a_1\| \\ 
	                                       &   + 
	\omega_2(\|x-y \|) \|a_2\| ),
\end{align*}
\begin{align*}
	   \| (f_1  a_2 + f_2  \bar a_1 )(x)- 
	(f_1  a_2 + f_2  \bar a_1 )(y) \| \leq &  C_2 (\omega_1(\|x-y \|) \|a_2\|  \\
	                                       &  + 
	\omega_2(\|x-y \|) \|a_1\| ).
\end{align*}
for all $x,y\in {\mathbb D}_{\bf i}$.

Note that picking out $\max\{\|a_1\|, \|a_2\|\}$ we can prove that   
$${}_{\bf i} \Lambda_{\|a_1\|\omega_1 + \|a_2\|\omega_2,
	\ \|a_2\|\omega_1 + \|a_1\|\omega_2} (\mathbb D^4)\subset 
	 {}_{\bf i} \Lambda_{ \omega_1 +\omega_2} (\mathbb D^4).$$ 
\end{enumerate}
\end{proof}
\begin{cor} Let $\omega_1$, $\omega_2$ be two regular majorants and $f\in {}_{\bf i} \Lambda_{\omega} (\mathbb D^4) \cap \mathcal N(\mathbb D^4)$. Then 
$$\|f\|_{ {}_{\bf i}\Lambda_{\omega_1}(\mathbb D^4)} = \|f\|_{ {}_{\bf k}\Lambda_{\omega_1}(\mathbb D^4)} = \|f\|_{ {}_{\bf i}\Lambda_{\omega_1, \omega_2}(\mathbb D^4)},$$ 
for all ${\bf k}\in \mathbb S^2$.
\end{cor}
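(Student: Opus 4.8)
The plan is to reduce both equalities to the fact that an intrinsic slice regular function preserves each slice. Concretely, for $f\in\mathcal N(\mathbb D^4)$ I claim that in the Splitting $f\mid_{\mathbb D_{\bf i}}=f_1+f_2{\bf j}$ of Theorem \ref{1.3} one has $f_2\equiv0$, so that $f_1=f\mid_{\mathbb D_{\bf i}}$ is $\mathbb C({\bf i})$-valued; equivalently, in the expansion $f(q)=\sum_{n\ge0}q^na_n$ on $\mathbb D^4$ every coefficient $a_n$ is real. I would prove this by substituting the Splitting into the defining identity $f(q)=\overline{f(\bar q)}$ on $\mathbb D_{\bf i}$: since complex conjugation is multiplicative on $\mathbb C({\bf i})$ while $\overline{w{\bf j}}=-w{\bf j}$ for every $w\in\mathbb C({\bf i})$, comparing the $\mathbb C({\bf i})$- and $\mathbb C({\bf i}){\bf j}$-components of that identity gives $f_1(z)=\overline{f_1(\bar z)}$ and $f_2(z)=-f_2(\bar z)$ on $\mathbb D_{\bf i}$. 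The first relation makes the Taylor coefficients of $f_1$ at the origin real; the second, evaluated on $\mathbb D_{\bf i}\cap\mathbb R$, gives $f_2=0$ there, hence $f_2\equiv0$ on the domain $\mathbb D_{\bf i}$ by the identity principle, and comparing $f\mid_{\mathbb D_{\bf i}}=f_1$ with $\sum_nz^na_n$ on $\mathbb D_{\bf i}\cap\mathbb R$ forces $a_n\in\mathbb R$.

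Granting this, the equality $\|f\|_{{}_{\bf i}\Lambda_{\omega_1}(\mathbb D^4)}=\|f\|_{{}_{\bf i}\Lambda_{\omega_1,\omega_2}(\mathbb D^4)}$ is immediate from Definitions \ref{def1} and \ref{def2}: because $f_2\equiv0$, the summand $\|f_2(x)-f_2(y)\|^2/\omega_2(\|x-y\|)^2$ vanishes identically, so the supremum defining $\|f\|_{{}_{\bf i}\Lambda_{\omega_1,\omega_2}(\mathbb D^4)}^2$ collapses to the one defining $\|f\|_{{}_{\bf i}\Lambda_{\omega_1}(\mathbb D^4)}^2$; in particular $\omega_2$ is irrelevant here.

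For the equality over different slices I would use the quaternionic rotations recalled in Section \ref{pre}. The rotation group of $\mathbb R^3$ acts transitively on $\mathbb S^2$, so there is $u\in\mathbb S^3$ with $u{\bf i}\bar u={\bf k}$; set $\Phi(q):=uq\bar u$. Then $\Phi$ is an $\mathbb R$-linear, norm-preserving bijection of $\mathbb H$ carrying $\mathbb C({\bf i})$ onto $\mathbb C({\bf k})$, hence $\mathbb D_{\bf i}$ onto $\mathbb D_{\bf k}$; and since $\bar uu=1$ and the $a_n$ are real,
$$f(\Phi(q))=\sum_{n\ge0}(uq\bar u)^na_n=u\left(\sum_{n\ge0}q^na_n\right)\bar u=u\,f(q)\,\bar u=\Phi(f(q)),\qquad q\in\mathbb D^4 .$$
Hence $\|f(\Phi(x))-f(\Phi(y))\|=\|u\left(f(x)-f(y)\right)\bar u\|=\|f(x)-f(y)\|$ and $\|\Phi(x)-\Phi(y)\|=\|x-y\|$ for all $x,y\in\mathbb D_{\bf i}$, so the difference quotients defining $\|f\|_{{}_{\bf k}\Lambda_{\omega_1}(\mathbb D^4)}$ and $\|f\|_{{}_{\bf i}\Lambda_{\omega_1}(\mathbb D^4)}$ correspond bijectively under $\Phi$ and the two suprema coincide. (One may reach the same conclusion from the Representation Formula in Theorem \ref{1.3}, which gives $f(x+{\bf k}y)=\alpha+{\bf k}\beta$ whenever $f(x+{\bf i}y)=\alpha+{\bf i}\beta$ with $\alpha,\beta\in\mathbb R$.)

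The single substantive step is the slice-preservation of functions in $\mathcal N(\mathbb D^4)$ — this is where the intrinsic hypothesis is really used; after that, everything reduces to routine bookkeeping with the isometry $\Phi$ (or with the Representation Formula) and the definitions of the norms, and I expect no further obstacle.
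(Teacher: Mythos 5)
Your proof is correct and follows essentially the same route as the paper: the slice-rotation identity $f(uq\bar u)=u f(q)\bar u$ (via real Taylor coefficients) for the equality across slices, and the vanishing of the second splitting component $f_2$ for the equality with the $\|\cdot\|_{{}_{\bf i}\Lambda_{\omega_1,\omega_2}(\mathbb D^4)}$ norm. The only difference is that you derive the facts $f_2\equiv 0$ and $a_n\in\mathbb R$ directly from the definition of intrinsic functions, whereas the paper simply cites the known characterization of $\mathcal N(\mathbb D^4)$ by real power-series coefficients.
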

\begin{proof}
Note that given $f \in \mathcal N(\Omega)$ there exists a sequence of real numbers $(a_{n})_{n=0}^{\infty}$, see \cite{CGS3,GS}, such that  
$f(q)= \sum_{n=0}^{\infty} q^n a_n$   for all $q\in \mathbb D^4$. Therefore for all $u\in \mathbb S^3$ one has that 
\begin{align*}
\|f(x) - f(y)\| = & \|u (   \sum_{n=0}^{\infty} x^n a_n -   \sum_{n=0}^{\infty} y^n a_n ) \bar u  \|  
              = \|   \sum_{n=0}^{\infty} (ux\bar u)^n a_n -   \sum_{n=0}^{\infty} (uy\bar u)^n a_n   \| \\
							=  &  \|f(u x\bar u) - f(uy\bar u)\| .\end{align*}
Choosing $u\in\mathbb S^3$ such that  $u{\bf i}\bar u = {\bf k}$ one obtains the first equality and for the second one we see that  
$f\mid_{\mathbb D_{\bf i}} =   f\mid_{\mathbb D_{\bf i}} + 0 {\bf j} $, i.e., $f_1=f\mid_{\mathbb D_{\bf i}}$ and $f_2=0$ in Definition \ref{def2}.
\end{proof}

\begin{rem} Note that if $f \in {}_{\bf i}\Lambda_{\omega}(\mathbb D^4)\cap C({\mathbb D}_{\bf i}, \mathbb H)$, then $f\mid_{{\mathbb D}_{\bf i}}$ can  be extended to a continuous function on $\overline{{\mathbb D}_{\bf i}}$. Similarly, if $f \in G\Lambda_{\omega}(\mathbb D^4)\cap C(\mathbb D^4, \mathbb H)$, then $f$ can be extended to a continuous function on $\overline{\mathbb D^4}$. 
\end{rem}
Now, we shall extend \cite[Theorems 1 and 2]{D} to slice regular function theory. 

\begin{prop} \label{prop01}  
\begin{enumerate}
\item Set $\bf{i}  \in \mathbb S^2$ and $f \in \mathcal{SR}(\mathbb D^4) \cap C(\overline{\mathbb D^4}, \mathbb H)$. Let $\omega$ and $\omega^2$ regular majorants. Then  
\begin{align*}  
\|f\|_{{}_{\bf i}\Lambda_{\omega}(\mathbb D^4)}^2  \asymp  &
\  \sup\{   \frac{    P[\|f_1\|^2] (x) - \|f_1(x)\|^2    }{ \omega( 1- \|x\|)^2 } \  \mid \ x\in \mathbb D_{\bf i}\}  \\ 
\\ 
& \  + \sup\{   \frac{  P[\|f_2\|^2] (x) - \|f_2(x)\|^2   }{ \omega( 1- \|x\|)^2 } \  \mid \ x\in \mathbb D_{\bf i}\}   \\
,
\end{align*}
where $f\mid_{{\mathbb D}_{\bf i}}=f_1+f_2 {\bf j}$ with ${\bf j}\in \mathbb S^2$ orthogonal to ${\bf i}$ and  $f_1,f_2 \in Hol({\mathbb D}_{\bf i})$.  
\item Set $\bf{i}  \in \mathbb S^2$ and $f \in \mathcal{SR}(\mathbb D^4) \cap C(\overline{\mathbb D^4}, \mathbb H)$. Let $\omega$ be a regular majorant. Then  
\begin{align*} 
 \|f\|_{{}_{\bf i}\Lambda_{\omega}(\mathbb D^4)}^2 \asymp N_1(f_1)^2 + N_1(f_2)^2   \asymp N_2(f_1)^2 + N_2(f_2)^2  \asymp N_3(f_1)^2 + N_3(f_2)^2   ,
\end{align*} 
where $f\mid_{{\mathbb D}_{\bf i}}=f_1+ f_2 {\bf j}$ with ${\bf j}\in \mathbb S^2$ orthogonal to ${\bf i}$ and $f_1,f_2 \in Hol({\mathbb D}_{\bf i})$. 	
\item Set $\bf{i}, {\bf k} \in \mathbb S^2$ and consider the regular majorants $\omega, \omega_1,\omega_2$. 
\begin{enumerate}
 \item  If $f\in {}_{\bf i} \Lambda_{\omega} (\mathbb D^4) \cap \mathcal{SR}(\mathbb D^4) \cap C(\overline{\mathbb D^4}, \mathbb H)$, then 
$$\|f\|_{ {}_{\bf i}\Lambda_\omega(\mathbb D^4)}\leq 2 \|f\|_{ {}_{\bf k}\Lambda_\omega(\mathbb D^4)}\leq 4 \|f\|_{ {}_{\bf i}\Lambda_\omega(\mathbb D^4)}.$$  
\item  If $f \in {}_{\bf i} \Lambda_{\omega , \omega }(\mathbb D^4)$, then 
$$
\|f\|_{ {}_{\bf i}\Lambda_{\omega, \omega}(\mathbb D^4)} =   \|f\|_{ {}_{\bf i}\Lambda_{\omega }(\mathbb D^4)} .$$
If $f \in {}_{\bf i} \Lambda_{\omega_1, \omega_2}(\mathbb D^4)$ then 
$$\|f\|_{ {}_{\bf i}\Lambda_{\omega_1 + \omega_2}(\mathbb D^4)}  \leq    \|f\|_{ {}_{\bf i}\Lambda_{\omega_1, \omega_2}(\mathbb D^4)}.$$ 
\end{enumerate}
\end{enumerate}
\end{prop}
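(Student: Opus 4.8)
The plan is to transfer the one–variable results of Section~\ref{pre} to the slice $\mathbb C({\bf i})$ via the Splitting Property, treating $f_1$ and $f_2$ separately, and to settle part (3) by direct manipulation with the Representation Formula.

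Fix ${\bf j}\in\mathbb S^2$ orthogonal to ${\bf i}$ and write $f\mid_{\mathbb D_{\bf i}}=f_1+f_2{\bf j}$ with $f_1,f_2\colon\mathbb D_{\bf i}\to\mathbb C({\bf i})$ holomorphic. Since $f\in C(\overline{\mathbb D^4},\mathbb H)$ and $2f_1=f-{\bf i}f{\bf i}$, $2f_2{\bf j}=f+{\bf i}f{\bf i}$ on $\mathbb D_{\bf i}$, both $f_1$ and $f_2$ extend continuously to $\mathbb S_{\bf i}$, hence may be viewed as elements of $Hol(\mathbb D)\cap C(\overline{\mathbb D},\mathbb C)$ through the field isomorphism $\mathbb C({\bf i})\cong\mathbb C$, which sends the quaternionic norm on $\mathbb C({\bf i})$ to the modulus, $\mathbb D_{\bf i}$ to $\mathbb D$, $\mathbb S_{\bf i}$ to $\mathbb S^1$, and $P_{\bf i}$ to $P$; under this identification the quantities $N_\ell(f_1),N_\ell(f_2)$ and the Poisson terms in the statement are precisely the ones in \eqref{Result1OfD} and \eqref{Result2OfD}. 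The key elementary fact is the Pythagorean identity $\|f(x)-f(y)\|^2=\|f_1(x)-f_1(y)\|^2+\|f_2(x)-f_2(y)\|^2$ for $x,y\in\mathbb D_{\bf i}$ (valid because $\mathbb C({\bf i})\perp\mathbb C({\bf i}){\bf j}$ in $\mathbb H$); dividing by $\omega(\|x-y\|)^2$ and taking the supremum over $x\neq y$ gives
$$\max\{\|f_1\|_{\Lambda_\omega(\mathbb D_{\bf i})}^2,\|f_2\|_{\Lambda_\omega(\mathbb D_{\bf i})}^2\}\le\|f\|_{{}_{\bf i}\Lambda_\omega(\mathbb D^4)}^2\le\|f_1\|_{\Lambda_\omega(\mathbb D_{\bf i})}^2+\|f_2\|_{\Lambda_\omega(\mathbb D_{\bf i})}^2,$$
so $\|f\|_{{}_{\bf i}\Lambda_\omega(\mathbb D^4)}^2\asymp\|f_1\|_{\Lambda_\omega(\mathbb D_{\bf i})}^2+\|f_2\|_{\Lambda_\omega(\mathbb D_{\bf i})}^2$ with absolute constants.

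For part (1) I would apply \eqref{Result1OfD} to $f_1$ and to $f_2$ (legitimate since $\omega$ and $\omega^2$ are regular majorants), square each relation — squaring converts $\{P[|\cdot|^2]-|\cdot|^2\}^{1/2}/\omega$ into $(P[|\cdot|^2]-|\cdot|^2)/\omega^2$, commutes with the supremum, and preserves $\asymp$ — and add them, using that $a_k\asymp b_k$ ($k=1,2$) with common constants forces $a_1+a_2\asymp b_1+b_2$; together with the display above this yields the asserted equivalence. Part (2) is the same argument with \eqref{Result2OfD} in place of \eqref{Result1OfD}, giving $\|f\|_{{}_{\bf i}\Lambda_\omega(\mathbb D^4)}^2\asymp N_1(f_1)^2+N_1(f_2)^2\asymp N_2(f_1)^2+N_2(f_2)^2\asymp N_3(f_1)^2+N_3(f_2)^2$.

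For part (3)(a) I would use the Representation Formula with $\mathbb C({\bf k})$ as auxiliary plane: writing a point of $\mathbb D_{\bf i}$ as $x+{\bf i}y$ with $x,y\in\mathbb R$, $s=|y|$ and $\epsilon\in\{\pm1\}$ the sign of $y$, it reads $f(x+{\bf i}y)=\tfrac12(1-\epsilon{\bf i}{\bf k})f(x+{\bf k}s)+\tfrac12(1+\epsilon{\bf i}{\bf k})f(x-{\bf k}s)$. Subtracting the two expressions obtained for $p,q\in\mathbb D_{\bf i}$ and grouping by the coefficients $1\mp{\bf i}{\bf k}$ — when the signs $\epsilon$ of the two points disagree one pairs the ``$+{\bf k}s$'' term of one with the ``$-{\bf k}s$'' term of the other — one gets $f(p)-f(q)=\tfrac12(1\mp{\bf i}{\bf k})\delta_1+\tfrac12(1\pm{\bf i}{\bf k})\delta_2$, where $\delta_1,\delta_2$ are differences of values of $f$ at two points of $\mathbb D_{\bf k}$ whose distance, by $\|(a+{\bf k}b)-(a'+{\bf k}b')\|^2=(a-a')^2+(b-b')^2$, equals $\|p-q\|$ in both cases. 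As $\|1\pm{\bf i}{\bf k}\|\le 1+\|{\bf i}{\bf k}\|=2$, this gives $\|f(p)-f(q)\|\le 2\|f\|_{{}_{\bf k}\Lambda_\omega(\mathbb D^4)}\,\omega(\|p-q\|)$, i.e. $\|f\|_{{}_{\bf i}\Lambda_\omega(\mathbb D^4)}\le 2\|f\|_{{}_{\bf k}\Lambda_\omega(\mathbb D^4)}$; swapping ${\bf i}$ and ${\bf k}$ and multiplying by $2$ yields the remaining inequality. For part (3)(b), the identity $\|f\|_{{}_{\bf i}\Lambda_{\omega,\omega}(\mathbb D^4)}=\|f\|_{{}_{\bf i}\Lambda_\omega(\mathbb D^4)}$ is immediate from the Pythagorean identity and the definitions, while $\|f\|_{{}_{\bf i}\Lambda_{\omega_1+\omega_2}(\mathbb D^4)}\le\|f\|_{{}_{\bf i}\Lambda_{\omega_1,\omega_2}(\mathbb D^4)}$ follows from $\|f(x)-f(y)\|\le\|f_1(x)-f_1(y)\|+\|f_2(x)-f_2(y)\|$ and the elementary inequality $\frac{(A+B)^2}{(a+b)^2}\le\frac{A^2}{a^2}+\frac{B^2}{b^2}$ (Cauchy–Schwarz together with $(a+b)^2\ge a^2+b^2$). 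The one place that needs genuine care is the sign bookkeeping in (3)(a); everything else is routine once the slice reduction of the second paragraph is in place.
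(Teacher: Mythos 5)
Your proposal is correct and follows essentially the same route as the paper: parts (1) and (2) are obtained, as in the paper, by applying the complex-variable results \eqref{Result1OfD} and \eqref{Result2OfD} to the splitting components $f_1,f_2$ together with the comparison $\|f\|_{{}_{\bf i}\Lambda_{\omega}(\mathbb D^4)}^2 \leq \|f_1\|_{\Lambda_{\omega}(\mathbb D_{\bf i})}^2+\|f_2\|_{\Lambda_{\omega}(\mathbb D_{\bf i})}^2 \leq 2\|f\|_{{}_{\bf i}\Lambda_{\omega}(\mathbb D^4)}^2$ coming from the Pythagorean identity, and part (3)(b) by the same identity plus your (correct) Cauchy--Schwarz estimate for the second inequality. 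For (3)(a), where the paper only says ``direct computations,'' your Representation Formula argument with the bound $\|1\pm{\bf i}{\bf k}\|\leq 2$ (and the careful pairing of points when the signs differ) is precisely the intended computation, mirroring the proof of Fact 4 of Proposition \ref{prop1}, and it correctly produces the constants $2$ and $4$.
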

\begin{proof}
\begin{enumerate}
\item By \eqref{Result1OfD} and the fact that $\alpha \asymp \beta$ and $\delta \asymp \gamma$ imply $\alpha^2  +  \delta^2  \asymp \beta^2 + \gamma^2$. Also, the application of inequalities    
$$\|f\|_{{}_{\bf i}\Lambda_{\omega}(\mathbb D^4)}^2  \leq   \|f_1 \|_{{}_{\bf i}\Lambda_{\omega}(\mathbb D_{\bf i})}^2 + \|f_2\|_{{}_{\bf i}\Lambda_{\omega}(\mathbb D_{\bf i})}^2 \leq 2 \|f\|_{{}_{\bf i}\Lambda_{\omega}(\mathbb D^4)}^2.$$ 
\item By \eqref{Result2OfD} and the properties stated above.  
\item Fact (a) follows from direct computations and the idea of the ensuing Fact (b) is the following:
\begin{align*}
\|f\|_{ {}_{\bf i}\Lambda_{\omega , \omega }(\mathbb D^4)}^2 =  & \sup\{    \frac{  \|f_1(x)- f_1(y) \|^2  }{ \omega (\|x-y\|)^2} +  \frac{  \|f_2(x)- f_2(y) \|^2  }{ \omega (\|x-y\|)^2}  \      \mid  \   x,y \in \mathbb D_{\bf i}, \  x\neq y\} \\
 =  &\sup\{    \frac{  \|f (x)- f (y) \|^2  }{ \omega (\|x-y\|)^2}  \      \mid  \   x,y \in \mathbb D_{\bf i}, \  x\neq y\} = \|f\|_{ {}_{\bf i}\Lambda_{\omega  }(\mathbb D^4)}^2.
\end{align*}
\end{enumerate}
\end{proof}

\begin{cor} 
\begin{enumerate}
\item Set $\bf{i}  \in \mathbb S^2$ and $f \in \mathcal{N}(\mathbb D^4) \cap C(\overline{\mathbb D^4}, \mathbb H)$.  If $\omega$ and $\omega^2$ are  regular majorant, then  
\begin{align*}  \sup\{   \frac{  P[\|f\mid_{\mathbb D_{\bf i}} \|^2] (x) - \|f\mid_{\mathbb D_{\bf i}}(x)\| ^2 }{ \omega( 1- \|x\|)  } \  \mid \ x\in \mathbb D_{\bf i}\}  \asymp 
\|f\|_{{}_{\bf i}\Lambda_{\omega}(\mathbb D^4)} .
\end{align*}
\item Suppose  $\bf{i}  \in \mathbb S^2$, $f \in \mathcal{N}(\mathbb D^4) \cap C(\overline{\mathbb D^4}, \mathbb H)$ and $\omega$ a regular majorant. Then  
\begin{align*} 
 \|f\|_{{}_{\bf i}\Lambda_{\omega}(\mathbb D^4)}^2 \asymp N_1(f\mid_{\mathbb D_{\bf i}})   \asymp N_2(f\mid_{\mathbb D_{\bf i}} )    \asymp N_3(f\mid_{\mathbb D_{\bf i}})   ,
\end{align*}
\end{enumerate}
\end{cor}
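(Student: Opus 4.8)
The plan is to deduce both statements from Proposition \ref{prop01} by using the only extra information available, namely that $f$ is \emph{intrinsic}. First I would recall (see \cite{CGS3,GS}) that $f\in\mathcal N(\mathbb D^{4})$ forces the coefficients of its power series to be real, so $f(q)=\sum_{n=0}^{\infty}q^{n}a_{n}$ with $a_{n}\in\mathbb R$. Fix $\mathbf i\in\mathbb S^{2}$ and $\mathbf j\in\mathbb S^{2}$ orthogonal to $\mathbf i$. Then for $q\in\mathbb D_{\mathbf i}$ each term $q^{n}a_{n}$ lies in $\mathbb C(\mathbf i)$, so $f\mid_{\mathbb D_{\mathbf i}}$ is $\mathbb C(\mathbf i)$-valued; in the splitting $f\mid_{\mathbb D_{\mathbf i}}=f_{1}+f_{2}\mathbf j$ of Definition \ref{def2} this says $f_{1}=f\mid_{\mathbb D_{\mathbf i}}$ and $f_{2}\equiv 0$. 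Since moreover $f\in C(\overline{\mathbb D^{4}},\mathbb H)$, the function $f_{1}$ belongs to $Hol(\mathbb D_{\mathbf i})\cap C(\overline{\mathbb D_{\mathbf i}},\mathbb C(\mathbf i))$, which, under the isometric identification $\mathbb C(\mathbf i)\cong\mathbb C$, $\mathbb D_{\mathbf i}\cong\mathbb D$, is precisely the class covered by \eqref{Result1OfD}, \eqref{Result2OfD} and hence by Proposition \ref{prop01}. Note also that, since $f_{2}\equiv 0$, one has $\|f\|_{{}_{\mathbf i}\Lambda_{\omega}(\mathbb D^{4})}=\|f_{1}\|_{{}_{\mathbf i}\Lambda_{\omega}(\mathbb D_{\mathbf i})}$.

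Next I would simply specialize Proposition \ref{prop01} to the case $f_{2}\equiv 0$. In part (1), the summand built from $f_{2}$ is identically zero, because $\|f_{2}\|^{2}\equiv 0$ gives $P[\|f_{2}\|^{2}]-\|f_{2}\|^{2}\equiv 0$, and what remains is the asserted equivalence of part~1 of the Corollary (using $\{\sup g\}^{1/2}=\sup g^{1/2}$ for nonnegative $g$ to move between the squared and unsquared formulations). For part (2), I would observe that $N_{k}(0)=0$ for $k=1,2,3$: the zero function has zero modulus, so its boundary $\Lambda_{\omega}$-seminorm vanishes and $P[0]-0\equiv 0$. Hence $N_{k}(f_{1})^{2}+N_{k}(f_{2})^{2}=N_{k}(f\mid_{\mathbb D_{\mathbf i}})^{2}$ and Proposition \ref{prop01}(2) collapses to
$$\|f\|_{{}_{\mathbf i}\Lambda_{\omega}(\mathbb D^{4})}^{2}\asymp N_{1}(f\mid_{\mathbb D_{\mathbf i}})^{2}\asymp N_{2}(f\mid_{\mathbb D_{\mathbf i}})^{2}\asymp N_{3}(f\mid_{\mathbb D_{\mathbf i}})^{2},$$
which is the claim.

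There is no genuine obstacle here: the entire content is the identity $f_{2}\equiv 0$, valid for intrinsic $f$, after which both parts are immediate substitutions into Proposition \ref{prop01}. The only points that need a line of justification are the elementary square-root bookkeeping linking the $\|\cdot\|$ and $\|\cdot\|^{2}$ forms coming from \eqref{Result1OfD} and \eqref{Result2OfD}, and the verification that the functionals $N_{1},N_{2},N_{3}$ vanish at the zero function so that the $f_{2}$-contributions in Proposition \ref{prop01} drop out.
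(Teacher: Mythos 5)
Your proposal is correct and follows essentially the same route as the paper: the paper's own proof is precisely the observation that $f\in\mathcal N(\mathbb D^4)$ has real power-series coefficients, hence $f_1=f\mid_{\mathbb D_{\bf i}}$ and $f_2=0$ in the splitting, after which both parts are read off from Proposition 3.9. Your extra bookkeeping (vanishing of the $f_2$-terms and of $N_k(0)$, and the square-root manipulations) only spells out details the paper leaves implicit.
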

\begin{proof}
Both facts follow from $f_1=f_1\mid_{\mathbb D_{\bf i}}$ and $f_2=0$ in Definition \ref{def2} since  $f \in \mathcal{N}(\mathbb D^4)$ and $\displaystyle f(q)= \sum_{n=0}^{\infty} q^n a_n$ for all $q\in \mathbb D^4$ iff $a_n\in \mathbb R$ for all $n$, see \cite{CGS3,GS}.
\end{proof}
As the function sets given in Definitions \ref{def1}, \ref{def2} and \ref{def3} depend of unit vectors the following proposition shows some  relationships between them.  
\begin{prop} \label{prop1}  Set $\bf{i} \in \mathbb S^2$ and consider the regular majorants $\omega, \omega_1,\omega_2$. 
\begin{enumerate}
\item If $f\in {}_{\bf i} \Lambda_{\omega} (\mathbb D^4)$ then  $\|f\|,  \| f\pm {\bf i}f{\bf i} \|\in  {}_{\bf i} \Lambda_{\omega} (\mathbb D^4)$. 
\item  ${}_{\bf i} \Lambda_{\omega}(\mathbb D^4)={}_{\bf i}\Lambda_{\omega, \omega}(\mathbb D^4)$.
\item  ${}_{\bf i}\Lambda_{\omega_1, \omega_2}(\mathbb D^4) \subset {}_{\bf i} \Lambda_{\omega_1+\omega_2}(\mathbb D^4)$.
\item ${}_{\bf i} \Lambda_{\omega}(\mathbb D^4) \cap \mathcal {SR}(\mathbb D^4) = {}_{\bf j} \Lambda_{\omega}(\mathbb D^4) \cap \mathcal {SR}(\mathbb D^4)$.	
\end{enumerate}
\end{prop}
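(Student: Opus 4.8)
The plan is to dispatch (1)--(3) with elementary triangle-inequality arguments and to reduce (4) to Proposition \ref{cor111}. For (1), I would first note that for $x,y\in{\mathbb D}_{\bf i}$,
\begin{align*}
(f\pm{\bf i}f{\bf i})(x)-(f\pm{\bf i}f{\bf i})(y)=\bigl(f(x)-f(y)\bigr)\pm{\bf i}\bigl(f(x)-f(y)\bigr){\bf i},
\end{align*}
so that $\|(f\pm{\bf i}f{\bf i})(x)-(f\pm{\bf i}f{\bf i})(y)\|\le 2\|f(x)-f(y)\|\le 2C\,\omega(\|x-y\|)$, i.e. $f\pm{\bf i}f{\bf i}\in{}_{\bf i}\Lambda_\omega(\mathbb D^4)$. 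Then the reverse triangle inequality $\bigl|\,\|g(x)\|-\|g(y)\|\,\bigr|\le\|g(x)-g(y)\|$ shows that $g\in{}_{\bf i}\Lambda_\omega(\mathbb D^4)$ implies $\|g\|\in{}_{\bf i}\Lambda_\omega(\mathbb D^4)$ (a real-valued function being regarded as $\mathbb H$-valued); applying this with $g=f$ and with $g=f\pm{\bf i}f{\bf i}$ completes (1).

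Parts (2) and (3) are the inclusions already indicated in the Remark following Definition \ref{def3}. Writing $f\mid_{{\mathbb D}_{\bf i}}=f_1+f_2{\bf j}$ with $f_1,f_2:{\mathbb D}_{\bf i}\to\mathbb C({\bf i})$ and ${\bf j}$ orthogonal to ${\bf i}$, the two-sided estimate
\begin{align*}
\max\{\|f_1(x)-f_1(y)\|,\|f_2(x)-f_2(y)\|\}\le\|f(x)-f(y)\|\le\|f_1(x)-f_1(y)\|+\|f_2(x)-f_2(y)\|
\end{align*}
yields ${}_{\bf i}\Lambda_\omega(\mathbb D^4)={}_{\bf i}\Lambda_{\omega,\omega}(\mathbb D^4)$ for (2); for (3) the right-hand bound gives $\|f(x)-f(y)\|\le C_1\omega_1(\|x-y\|)+C_2\omega_2(\|x-y\|)\le\max\{C_1,C_2\}(\omega_1+\omega_2)(\|x-y\|)$, so $f\in{}_{\bf i}\Lambda_{\omega_1+\omega_2}(\mathbb D^4)$. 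Here one uses that the quaternionic norm renders the splitting independent of the choice of ${\bf j}$, as remarked earlier.

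For (4), I would combine (2) with Proposition \ref{cor111}. By (2), ${}_{\bf i}\Lambda_\omega(\mathbb D^4)\cap\mathcal{SR}(\mathbb D^4)={}_{\bf i}\Lambda_{\omega,\omega}(\mathbb D^4)\cap\mathcal{SR}(\mathbb D^4)$, and Proposition \ref{cor111} with $\omega_1=\omega_2=\omega$ sandwiches this between $G\Lambda_{\omega+\omega}(\mathbb D^4)\cap\mathcal{SR}(\mathbb D^4)$ and ${}_{\bf i}\Lambda_{\omega+\omega}(\mathbb D^4)\cap\mathcal{SR}(\mathbb D^4)$. Since replacing a majorant by a positive constant multiple of itself does not alter the associated function set (only the admissible constant in the Lipschitz inequality), all three sets coincide with $G\Lambda_\omega(\mathbb D^4)\cap\mathcal{SR}(\mathbb D^4)$; in particular ${}_{\bf i}\Lambda_\omega(\mathbb D^4)\cap\mathcal{SR}(\mathbb D^4)=G\Lambda_\omega(\mathbb D^4)\cap\mathcal{SR}(\mathbb D^4)$. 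As $G\Lambda_\omega(\mathbb D^4)$ is defined with no reference to a slice, the same argument applied with ${\bf j}$ in place of ${\bf i}$ gives ${}_{\bf j}\Lambda_\omega(\mathbb D^4)\cap\mathcal{SR}(\mathbb D^4)=G\Lambda_\omega(\mathbb D^4)\cap\mathcal{SR}(\mathbb D^4)$, so the two slice-wise spaces are equal.

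The only substantial ingredient is Proposition \ref{cor111}, whose proof hinges on the Representation Formula together with the case distinction according to whether ${\bf p}$ or ${\bf q}$ is the zero vector; that case analysis is the real difficulty, and here it is simply quoted. Everything else is bookkeeping with the triangle inequality and the monotonicity properties of regular majorants.
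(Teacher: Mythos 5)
Your arguments for items (1)--(3) are correct and essentially the same as the paper's: for (1) the paper uses the identities $2f_1=f-{\bf i}f{\bf i}$ and $2f_2{\bf j}=f+{\bf i}f{\bf i}$ together with the reverse triangle inequality, which is what you do in slightly different order; for (2)--(3) the paper uses the exact identity $\|f(x)-f(y)\|^2=\|f_1(x)-f_1(y)\|^2+\|f_2(x)-f_2(y)\|^2$ on $\mathbb D_{\bf i}$, where you use the equivalent two-sided triangle estimate, and both give the claimed set equalities/inclusions. For item (4) you take a genuinely different route. The paper argues directly from the Representation Formula: for $x=x_0+x_1{\bf i}$, $y=y_0+y_1{\bf i}$ it writes $f(x_0+x_1{\bf j})-f(y_0+y_1{\bf j})=\tfrac12(1-{\bf j}{\bf i})\bigl(f(x)-f(y)\bigr)+\tfrac12(1+{\bf j}{\bf i})\bigl(f(\bar x)-f(\bar y)\bigr)$, so the slice-${\bf i}$ modulus-of-continuity bound transfers to the slice $\mathbb D_{\bf j}$ with constant $2C$. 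You instead invoke Proposition \ref{cor111} with $\omega_1=\omega_2=\omega$, identify ${}_{\bf i}\Lambda_{\omega,\omega}(\mathbb D^4)$ with ${}_{\bf i}\Lambda_{\omega}(\mathbb D^4)$ via your item (2), absorb the harmless factor in $2\omega$ into the constant, and pass through the slice-independent space $G\Lambda_{\omega}(\mathbb D^4)$ --- which is precisely the content of the Remark following Proposition \ref{cor111}. Your route is sound and non-circular (Proposition \ref{cor111} is proved earlier and independently of this proposition), and it yields the stronger conclusion ${}_{\bf i}\Lambda_{\omega}(\mathbb D^4)\cap\mathcal{SR}(\mathbb D^4)=G\Lambda_{\omega}(\mathbb D^4)\cap\mathcal{SR}(\mathbb D^4)$ as a byproduct, at the cost of leaning on the heavier case analysis in that proposition; the paper's direct two-line Representation Formula computation is more elementary and self-contained for this particular item.
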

\begin{proof}
\begin{enumerate}
\item  Given $f\in {}_{\bf i} \Lambda_{\omega} (\mathbb D^4)$ set $f=f_1+f_2{\bf j}$ on ${\mathbb D}_{\bf i}$, where ${\bf j}\in \mathbb S^2$ is orthogonal to ${\bf i}$  and $f_1,f_2 : {\mathbb D}_{\bf i} \to {\mathbb D}_{\bf i} $. Then we see that  
\begin{align}\label{components}
2f_1 = f-{\bf i}f {\bf i }, \ \  2f_2 {\bf j} = f+{\bf i}f {\bf i }, \  \ \textrm{on } \ {\mathbb D}_{\bf i}.
\end{align}
From inequalities 
$$|\ \|f(x)\|-  \|f(y)\| \  | \leq  \|f(x)- f(y)\|,$$
$$\max\{ |\ \|f_1(x)\|-  \|f_1(y)\| \  |, \  |\ \|f_2(x)\|-\|f_2(y)\| \  | \}\leq \|f(x)- f(y)\|,$$
for all $x,y \in {\mathbb D}_{\bf i}$, it follows that $\|f\|, \|f\pm {\bf i}f{\bf i}\|\in {}_{\bf i} \Lambda_{\omega} (\mathbb D^4)$. 
\item[2. and 3.] With the previous notation the identity 
\begin{align}\label{norm_components}   
\|f(x)- f(y)\|^2  =  \|f_1(x)-f_1(y)\|^2  +  \|f_2(x)-f_2(y)\|^2 ,\quad \forall x,y\in {\mathbb D}_{\bf i},
\end{align}  
holds, allowing us to see that 
$${}_{\bf i} \Lambda_{\omega}(\mathbb D^4)={}_{\bf i}\Lambda_{\omega, \omega}(\mathbb D^4)$$ 
and 
$${}_{\bf i}\Lambda_{\omega_1, \omega_2}(\mathbb D^4) \subset {}_{\bf i} \Lambda_{\omega_1+\omega_2}(\mathbb D^4).$$
\item[4.]  Given $f \in {}_{\bf i} \Lambda_{\omega}(\mathbb D^4) \cap \mathcal {SR}(\mathbb D^4)$ and ${\bf j}\in \mathbb S^2$, according to the Representation Formula, we have for every  $ x_0+ x_1{\bf j}, y_0+ y_1{\bf j} \in  {\mathbb D}_{{\bf j}}$ with $x_0,x_1, y_0,y_1 \in\mathbb R$ 
\begin{align*}
\|f(x_0+ x_1{\bf j}) - f(y_0+ y_1{\bf j})\| = \\ \frac {1}{2}\|( 1- {\bf j} {\bf i})(f(x) - f(y)) 
+   ( 1 + {\bf j}  {\bf i})  (f(\bar x) - f(\bar y ) ) \|  \\
\leq  \|  f( x ) - f(y )\| +   \| f(\bar x) - f(\bar y) \| \leq 2 C \omega (\|x-y\|) \\
\leq   2 C \omega (\| (x_0+ x_1{\bf j}) - (y_0+ y_1{\bf j})\|),  
\end{align*}
where $x = x_0+ x_1{\bf i}$ and $y=y_0+ y_1{\bf i}$. 
\end{enumerate}
\end{proof}
\begin{rem}
Repeating the computations presented in Propositions \ref{prop0} and \ref{prop1} enables us to see that ${}_{\bf i} \Lambda_{\omega}(\mathbb S^3)$, ${}_{\bf i}\Lambda_{\omega_1, \omega_2}(\mathbb S^3) $ and $G \Lambda_{\omega}(\mathbb S^3)$ have similar properties of ${}_{\bf i} \Lambda_{\omega}(\mathbb D^4)$, ${}_{\bf i}\Lambda_{\omega_1, \omega_2}(\mathbb D^4) $ and $G \Lambda_{\omega}(\mathbb D^4)$, respectively, and that is why they are omitted. 
\end{rem}
Let ${\omega}$, $\omega_1$ and $\omega_2$ be regular majorant. The next propositions characterize the elements of $\mathcal {SR}(\mathbb D^4)\cap {}_{\bf i} \Lambda_{\omega}(\mathbb D^4)$ and of $\mathcal {SR}(\mathbb D^4)\cap {}_{\bf i} \Lambda_{\omega_1,\omega_2} (\mathbb D^4)$, which extend results contained in \cite{D,P}.   
\begin{prop}     
\begin{enumerate}
\item  Set $f\in \mathcal {SR}(\mathbb D^4)$. Then $f\in {}_{\bf i} \Lambda_{\omega}(\mathbb D^4)$ if and only if 
$$\|f'(x) \pm {\bf i} f'(x) {\bf i}\|\leq C\frac{\omega (1-\|x\|)}{1-\|x\|},$$
for $C$ independent of $x\in {\mathbb D}_{\bf i}$.
\item  Set $f\in \mathcal {SR}(\mathbb D^4)\cap C(\overline{\mathbb D^4}, \mathbb H)$. Then    
$$\frac{1}{2}(1-\|x\|)\|f'(x) \pm {\bf i} f'(x) {\bf i}\|   +   \|f(x) \pm {\bf i} f(x) {\bf i}\| \leq 2 M_x,$$
for $C$ independent of $x\in {\mathbb D}_{\bf i}$, where 
$$M_x = \sup \{\|f(y)\| \ \mid \  	\|y -  x\|\leq 1-\|x\|, \ y\in {\mathbb D}_{\bf i}\}.$$
\item  Set $f\in \mathcal {SR}(\mathbb D^4)$. Then $f\in {}_{\bf i} \Lambda_{\omega} (\mathbb D^4)$ if and only if 
$$\|f'(x)\|\leq C\frac{\omega (1-\|x\|)}{1-\|x\|},$$
for $C$ independent of $x\in {\mathbb D}_{\bf i}$.
\item  Set $ f\in \mathcal {SR}(\mathbb D^4)\cap {}_{\bf i} \Lambda_{\omega_1, \omega_2} (\mathbb D^4)$. Then  
$$\|f '(x) \|\leq C \left( \frac{ \sqrt{ \omega_1 (1-\|x\|)^2+\omega_2 (1-\|x\|)^2 } } 
{(1-\|x\|)}\right),$$
for $C$ independent of $x\in {\mathbb D}_{\bf i}$.
\item Set $f\in \mathcal{SR}(\mathbb D^4)\cap C(\overline{\mathbb D^4},\mathbb H)$. Then   
\begin{align*}
\frac{1}{4}(1-\|x\|)^2\|f'(x)\|^2 +   \|f(x)\|^2     \leq  &  (\|x\|-1) (\ \|f_1'(x)\|\|f_1(x)\|  + 
\|f_2'(x)\|\|f_2(x)\|  \ )  \\ 
 & + M_{1,x}^2+ M_{2,x}^2,\end{align*} 
for $ x\in {\mathbb D}_{\bf i}$, where $f\mid_{{\mathbb D}_{\bf i}}= f_1+ f_2{\bf j}$ with $f_1, f_2\in Hol({\mathbb D}_{\bf i}) \cap C(\overline{\mathbb D}_{\bf i}, \mathbb C(\bf i))$ and 
$$M_{k,x} = \sup\{\|f_k(y)\|  \ \mid  \   \|y-x\| \leq 1-\|x\|, \ y\in {\mathbb D}_{\bf i}\},$$
for $k=1,2$.
\item Consider $f\in \mathcal{SR}(\mathbb D^4)\cap C(\overline{\mathbb D^4}, \mathbb H)$. Let us introduce one more piece of notation:
$$M= \sup\{ \|f(w) \|  \ \mid \ w\in {\mathbb D}_{\bf i}\}.$$
If there exists ${\bf i}\in \mathbb S^2$ and a regular majorant $\omega$ such that 
$$\|M^2- \overline{f(x)} f(\tilde{x})\| \leq C(1+\|x\|)\omega(1-\|x\|),$$
for $C$ independent of $x\in {\mathbb D}_{\bf i}\setminus Z_{f'}$, where 
\begin{align*} 
  \tilde{x} = & {(\overline{f (x)})}^{-1}T_g (f' (x)^{-1}x f' (x) )
\overline{ f (x)},    \\
 g(x)=  &  1-\overline{f(x)} * f(x),
\end{align*} 
where  $T_g (q) = (g^c(q))^{-1}q g^c(q)$  for all $q \in \mathbb D^4$  such that $g^ s(q)\neq 0 $,
then 
$$\|f'(x)\| \leq \frac{C}{M} \frac{\omega(1-\|x\|)}{ 1-\|x\|};$$
that is, $f\in {}_{\bf i}\Lambda_{\omega}(\mathbb D^4)$, see Fact 3. of the present proposition.
\end{enumerate}
\end{prop}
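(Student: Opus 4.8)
The plan is to derive the assertion from Fact 3 of the present proposition, so that it suffices to prove the pointwise estimate
$$\|f'(x)\|\le \frac{C}{M}\,\frac{\omega(1-\|x\|)}{1-\|x\|},\qquad x\in\mathbb D_{\bf i}.$$
If $M=0$ then $f$ vanishes identically on $\mathbb D_{\bf i}$, hence (Representation Formula, Theorem \ref{1.3}) $f\equiv 0$ on $\mathbb D^4$ and $f\in{}_{\bf i}\Lambda_{\omega}(\mathbb D^4)$ trivially; so I would assume $M>0$ and pass to the normalised function $F:=M^{-1}f\in\mathcal{SR}(\mathbb D^4)\cap C(\overline{\mathbb D^4},\mathbb H)$, whose restriction to the slice $\mathbb D_{\bf i}$ is bounded by $1$. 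Since for $x\in Z_{f'}$ one has $f'(x)=0$ and the required inequality is trivial, it is enough to argue for $x\in\mathbb D_{\bf i}\setminus Z_{f'}$ — exactly the set on which the hypothesis is imposed and on which $\tilde x$ is defined.

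The core of the argument is a Schwarz--Pick type estimate for slice regular self-maps of $\mathbb D^4$, the counterpart of the classical inequality $|G'(z)|\le(1-|z|^2)^{-1}\bigl(1-|G(z)|^2\bigr)$ for $G\in Hol(\mathbb D)$ with $|G|\le1$. In the quaternionic setting one does not get $\|F'(x)\|(1-\|x\|^2)\le 1-\|F(x)\|^2$ directly; instead, one builds the analogue of $G\mapsto(G-G(z))(1-\overline{G(z)}\,G)^{-1}$ by means of the $*$-product and unwinds it using the product rule $p*h(q)=p(q)\,h\bigl(p(q)^{-1}qp(q)\bigr)$ together with the identities $h^{-*}=\tfrac1{h^{s}}*h^{c}$ and $(h^{-*})'=-h^{-*}*h'*h^{-*}$ applied to $h=g=1-\overline{f}*f$. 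Each $*$-factor then contributes one conjugation of the argument of $F$: the conjugation by $f'(x)$ comes from differentiating $F$, the conjugation $T_g(q)=(g^c(q))^{-1}qg^c(q)$ comes from $g^{-*}$, and the conjugation by $\overline{f(x)}$ comes from the outer Möbius factor; assembling them one arrives at
$$\|F'(x)\|\ \le\ \frac{\bigl\|\,1-\overline{F(x)}\,F(\tilde x)\,\bigr\|}{1-\|x\|^{2}},\qquad \tilde x={(\overline{f(x)})}^{-1}\,T_g\!\bigl(f'(x)^{-1}xf'(x)\bigr)\,\overline{f(x)},$$
for $x\in\mathbb D_{\bf i}\setminus Z_{f'}$, the bound itself being obtained by a quaternionic Schwarz lemma on the slice disc about $x$ of radius $1-\|x\|$, exactly as in Facts 1, 2 and 5 of this proposition. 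Since $f'=MF'$ and $M^{2}-\overline{f(x)}f(\tilde x)=M^{2}\bigl(1-\overline{F(x)}F(\tilde x)\bigr)$, this rescales to
$$\|f'(x)\|\ \le\ \frac{\bigl\|\,M^{2}-\overline{f(x)}f(\tilde x)\,\bigr\|}{M\,(1-\|x\|^{2})}.$$

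Finally I would insert the hypothesis $\|M^{2}-\overline{f(x)}f(\tilde x)\|\le C(1+\|x\|)\,\omega(1-\|x\|)$ and cancel the factor $1+\|x\|$ against $1-\|x\|^{2}=(1-\|x\|)(1+\|x\|)$, obtaining $\|f'(x)\|\le (C/M)\,\omega(1-\|x\|)/(1-\|x\|)$ on $\mathbb D_{\bf i}\setminus Z_{f'}$, hence on all of $\mathbb D_{\bf i}$ by the triviality remark; Fact 3 of the present proposition then yields $f\in{}_{\bf i}\Lambda_{\omega}(\mathbb D^4)$.

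The step I expect to be the main obstacle is the Schwarz--Pick estimate itself: because of noncommutativity one must keep precise account, through each application of the product rule, of how the argument of $F$ is rotated, while checking simultaneously that on $\mathbb D_{\bf i}\setminus Z_{f'}$ all the inverses $f'(x)^{-1}$, $g^c(x)^{-1}$, $\overline{f(x)}^{-1}$ occurring in the computation are legitimate (in particular that $g^{s}$ does not vanish where it is used), so that the composed conjugation is well defined and coincides with the stated $\tilde x$. The remaining steps — the reduction to Fact 3, the normalisation, and the cancellation of $1+\|x\|$ — are routine.
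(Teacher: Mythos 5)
Your proposal addresses only Fact 6 of the proposition, treating Facts 1--5 (in particular Fact 3, which you invoke at the end, and Facts 1, 2, 5, which you cite as models) as already available. The statement to be proved is the whole six-part proposition: the paper first establishes Facts 1--5 via the Splitting Property, Fact 2 of Proposition \ref{prop1}, Pavlovi\'c's Lemmas 1 and 2 from \cite{P} applied to the holomorphic components $f_1,f_2$ of $f\mid_{\mathbb D_{\bf i}}$, the identities \eqref{components} and \eqref{norm_components}, and the parallelogram identity $4\|f'(x)\|^2=\|f'(x)+{\bf i}f'(x){\bf i}\|^2+\|f'(x)-{\bf i}f'(x){\bf i}\|^2$; none of this is carried out in your text, so five of the six claims are left unproved.

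Even for Fact 6 the decisive step is missing. The inequality $\|F'(x)\|\le \bigl\|1-\overline{F(x)}F(\tilde x)\bigr\|/(1-\|x\|^{2})$, with $\tilde x$ exactly as in the statement, is the entire content of the argument, and you only sketch it (``assembling them one arrives at\ldots''), yourself flagging it as the expected main obstacle without resolving it. The paper does not reprove this estimate: it quotes equation (3.10) of the quaternionic Schwarz--Pick lemma of Bisi--Stoppato \cite{BS}, namely $\|\partial_c f \ast (1-f(q_0)\ast f(q))^{-\ast}\|_{\mid_{q_0}}\le (1-|q_0|^{2})^{-1}$ for slice regular self-maps of $\mathbb D^4$, and then the $\ast$-product is unwound at $q_0=x$ using $p\ast h(q)=p(q)\,h(p(q)^{-1}qp(q))$ and $h^{-\ast}=\frac{1}{h^{s}}\ast h^{c}$, which is precisely where $\tilde x$, the map $T_g$, and the restriction $x\in\mathbb D_{\bf i}\setminus Z_{f'}$ enter. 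Note also that invoking that lemma requires $F=f/M$ to be a self-map of $\mathbb D^4$, i.e. $\|f\|\le M$ on the whole ball, whereas $M$ is defined as a supremum over the slice $\mathbb D_{\bf i}$ only; your normalisation step does not address this point, so even granting your sketch the hypotheses of the Schwarz--Pick input are not verified. The surrounding reductions you give (handling $M=0$ and $x\in Z_{f'}$, the rescaling $f'=MF'$, cancelling $1+\|x\|$ against $1-\|x\|^{2}$, and the final appeal to Fact 3) are correct and coincide with the paper's.
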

\begin{proof}
\begin{enumerate}
\item With the Splitting  Property in mind, consider $f_1,f_2\in Hol({\mathbb D}_{\bf i})$ such that $f\mid_{{\mathbb D}_{\bf i}} = f_1+f_2 {\bf j}$ where ${\bf j}\in \mathbb S^2$ is orthogonal to ${\bf i}$. From Fact 2. of Proposition \ref{prop1} one has that $f\in {}_{\bf i} \Lambda_{\omega} 
(\mathbb D^4)$ if and only if  $f_1,f_2\in \Lambda_{\omega, \omega} 
({\mathbb D}_{\bf i})$, that is, 
$$\|f '(x) \pm {\bf i} f '(x) {\bf i}\|\leq C\frac{\omega (1-\|x\|)}{1-\|x\|},$$
for $C$ independent of $x\in {\mathbb D}_{\bf i}$, where we use \cite[Lemma 1]{P}. The result is obtained from \eqref{components} applied to $f'$. 
\item Splitting  Property implies that 
$f\mid_{{\mathbb D}_{\bf i}} = f_1+f_2 {\bf j}$ where  $f_1,f_2\in Hol({\mathbb D}_{\bf i})$ and ${\bf j}\in \mathbb S^2$ is orthogonal to ${\bf i}$. From Fact 2. of Proposition \ref{prop1} one has that $f\in {}_{\bf i} \Lambda_{\omega}(\mathbb D^4)$ if and only if  $f_1,f_2\in \Lambda_{\omega, \omega}({\mathbb D}_{\bf i})$. Applying \cite[Lemma 2]{P} to the complex components of $f\mid_{{\mathbb D}_{\bf i}}$ and using \eqref{components} in $f$ and $f'$ to finish the proof.
\item It follows from Fact 1. commbining with the following consequence of the parallelogram identity:
$$4\|f'(x)\|^2 = \|f'(x) + {\bf i} f'(x) {\bf i}\|^2+ \|f'(x) - {\bf i} f'(x) {\bf i}\|^2,$$
for all $x\in {\mathbb D}_{\bf i}$.
\item  Kipping in mind the Splitting Property, set $f_1,f_2\in Hol({\mathbb D}_{\bf i})$ such that $f\mid_{{\mathbb D}_{\bf i}} = f_1+f_2 {\bf j}$ where ${\bf j}\in \mathbb S^2$ is orthogonal to ${\bf i}$. From Fact 2. of Proposition \ref{prop1} one has that $f\in {}_{\bf i} \Lambda_{\omega}(\mathbb D^4)$ if and only if  $f_1,f_2\in \Lambda_{\omega, \omega}({\mathbb D}_{\bf i})$, i.e., 
$$\|f_k'(x) \|\leq C \frac{\omega_k (1-\|x\|)}{1-\|x\|},$$
for $C$ independent of $x\in {\mathbb D}_{\bf i}$ and for $k=1,2$, see \cite[Lemma 1]{P}. Applying \eqref{norm_components} to $f'$ yields  
$$\|f'(x)\|^2\leq C^2 \left( \frac{\omega_1 (1-\|x\|)^2+\omega_2 (1-\|x\|)^2}{(1-\|x\|)^2}\right).$$
\item  Given $f\mid_{{\mathbb D}_{\bf i}}= f_1+ f_2{\bf j}$ with $f_1, f_2\in Hol({\mathbb D}_{\bf i}) \cap C(\bar {\mathbb D}_{\bf i}, 
\mathbb C(\bf i))$ from \cite[Lemma 2]{P} we see that    
$$\frac{1}{2} (1- \|x\|) \|f_k'(x)\|+\|f_k(x)\|\leq M_{k,x},  $$ for $  x\in {\mathbb D}_{\bf i}$,
where   $M_{k,x} = \sup\{ \|f_k (y)\|  \ \mid  \   \|y-x\| \leq 1-\|x\|, \ y\in {\mathbb D}_{\bf i}\}$ for $k=1,2$. 

Therefore 
$$\frac{1}{4} (1- \|x\|)^2\|f_k'(x)\|^2+ (1- \|x\|)\|f_k'(x)\| \|f_k(x)\|+ \|f_k(x)\|^2\leq M_{k,x}^2,$$
for $k=1,2$. Adding terms in the previous inequalities and using \eqref{norm_components} applied to $f$ and $f'$, the main result follows.
\item Let $f\in \mathcal{SR}(\mathbb D^4)\cap C(\overline{\mathbb D^4}, \mathbb H)$. With the notation $F(x)=\dfrac{f(x)}{M}$ for all $x\in \mathbb D^4$ rewrite the hypothesis as follows: 
$$\|1- \overline{F(x)} F(\tilde{x})\| \leq \frac{C}{M^2}(1+\|x\|)\omega(1-\|x\|),$$
for $C$ independent of $x$, or equivalently 
$$ \frac{\|1- \overline{F(x)} F(\tilde{x})\|}{1- \|x\|^2} \leq \frac{C   }{M^2}\  \frac{\omega(1-\|x\|) }{  1- \|x\|}.$$
Equation (3.10) of \cite[Theorem 3.7 (Schwarz-Pick lemma)]{BS} shows that if $f:\mathbb D^4 \to \mathbb D^4$ is a slice regular function
and $q_0 \in \mathbb D^4$ implies that
$$\|\partial_c f \ast  (1 - f (q_0) \ast f (q))^{ -\ast} \|_{\mid_{q_0}} \leq \frac{ 1}{1 - |q_0|^2} .$$
This finally yields  
$$\|F'(x)\| \leq \frac{C}{M^2} \frac{\omega(1-\|x\|)}{ 1-\|x\|},$$ 
that is 
$$\|f'(x)\| \leq \frac{C}{M} \frac{\omega(1-\|x\|)}{ 1-\|x\|}$$
and Fact 6. is proved
\end{enumerate}
\end{proof}

\begin{cor}     
\begin{enumerate}
\item Set $f\in \mathcal {SR}(\mathbb D^4)$. Then $f\in {}_{\bf i} \Lambda_{\omega}(\mathbb D^4)$ if and only if 
$$\|f '(q)  \|\leq C\frac{\omega (1-\|q\|)}{1-\|q\|},$$
for some $C$ independent of the choice of $q\in {\mathbb D}^4$.
\item   Set  $ f\in \mathcal {SR}(\mathbb D^4)\cap C(\overline{\mathbb D^4}, \mathbb H)$ then 
\begin{align*} 
 & \ \ \frac{1}{2}(1-\|q\|)\|f'(q)\|   +   \|f(q)  \|  \\
	\leq  &   \ \    \sup \{\|f(y)\| \ \mid \  	\sqrt{ (\Re y - q_0)^2 +(\Im y - |{\bf q}| )^2} \leq 1-\|q\|, \ y \in {\mathbb D}_{\bf i}\}  \\
	       &  +   \sup \{\|f(y)\| \ \mid \  	\sqrt{ (\Re y - q_0)^2 +(\Im y  + |{\bf q}| )^2} \leq 1-\|q\|, \ y \in {\mathbb D}_{\bf i}\}.  
				\end{align*}
\item  If $ f\in \mathcal {SR}(\mathbb D^4)\cap {}_{\bf i} \Lambda_{\omega_1,
\omega_2} (\mathbb D^4)$ then  
$$\|f '(q) \|\leq C \left(\frac{\omega_1 (1-\|q\|)+\omega_2 (1-\|q\|) } 
{(1-\|q\|)}\right),$$
for $C$ independent of the choice of $q\in {\mathbb D}^4$.
\end{enumerate}
\end{cor}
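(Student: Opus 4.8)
All three assertions globalize to $\mathbb D^4$ the slice--$\mathbb D_{\bf i}$ estimates of the preceding proposition. The observation driving the argument is that every $q\in\mathbb D^4$ lies either in $\mathbb R\subset\mathbb D_{\bf i}$ or in the slice $\mathbb D_{{\bf I}_q}$ with ${\bf I}_q=|{\bf q}|^{-1}{\bf q}$, so it suffices to have those slice estimates available \emph{uniformly over all slices}; for Facts~1 and~3 this uniformity is supplied by the slice--independence results already proved (Proposition~\ref{prop1}(3)--(4), Proposition~\ref{cor111}), and Fact~2 is handled through the Representation Formula (Theorem~\ref{1.3}(2)).

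For Fact~1 the implication ``$\Leftarrow$'' is immediate: specializing the hypothesis to $q=x\in\mathbb D_{\bf i}$ and invoking Fact~3 of the preceding proposition gives $f\in{}_{\bf i}\Lambda_\omega(\mathbb D^4)$. For ``$\Rightarrow$'' I would argue that, by (the proof of) Proposition~\ref{prop1}(4), $f\in{}_{\bf j}\Lambda_\omega(\mathbb D^4)\cap\mathcal{SR}(\mathbb D^4)$ for every ${\bf j}\in\mathbb S^2$ with $\|f\|_{{}_{\bf j}\Lambda_\omega(\mathbb D^4)}\le 2\|f\|_{{}_{\bf i}\Lambda_\omega(\mathbb D^4)}$; Fact~3 of the preceding proposition, applied on the slice $\mathbb D_{\bf j}$, then produces a constant $C$ \emph{independent of} ${\bf j}$ with $\|f'(x)\|\le C\,\omega(1-\|x\|)/(1-\|x\|)$ for all $x\in\mathbb D_{\bf j}$. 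Choosing ${\bf j}={\bf I}_q$ (or ${\bf j}={\bf i}$ when $q\in\mathbb R$) then yields the bound at an arbitrary $q\in\mathbb D^4$. Fact~3 follows at once: Proposition~\ref{prop1}(3) gives $f\in\mathcal{SR}(\mathbb D^4)\cap{}_{\bf i}\Lambda_{\omega_1+\omega_2}(\mathbb D^4)$, and $\omega_1+\omega_2$ is again a regular majorant, so Fact~1 applied with the majorant $\omega_1+\omega_2$ is exactly the assertion (one may equally route this through Proposition~\ref{cor111}, or through Fact~4 of the preceding proposition together with $\sqrt{a^2+b^2}\le a+b$).

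For Fact~2 I would fix $q\in\mathbb D^4$ and set $z=q_0+{\bf i}\,|{\bf q}|$ and $\bar z=q_0-{\bf i}\,|{\bf q}|$ in $\mathbb D_{\bf i}$, noting $\|z\|=\|\bar z\|=\|q\|$. Since $f'\in\mathcal{SR}(\mathbb D^4)$ as well, the Representation Formula applied to $f$ and to $f'$ gives
$$f(q)=\tfrac12(1-{\bf I}_q{\bf i})f(z)+\tfrac12(1+{\bf I}_q{\bf i})f(\bar z),\qquad f'(q)=\tfrac12(1-{\bf I}_q{\bf i})f'(z)+\tfrac12(1+{\bf I}_q{\bf i})f'(\bar z),$$
with $\|\tfrac12(1\mp{\bf I}_q{\bf i})\|\le 1$. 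Taking norms and using $\|q\|=\|z\|=\|\bar z\|$ reduces the left side of the claimed inequality to a sum of two terms of the shape $\tfrac12(1-\|z\|)\|f'(z)\|+\|f(z)\|$, one at $z$ and one at $\bar z$. Each of these I would estimate by Fact~2 of the preceding proposition --- combining its ``$+$'' and ``$-$'' instances with the parallelogram identity $4\|g\|^2=\|g+{\bf i}g{\bf i}\|^2+\|g-{\bf i}g{\bf i}\|^2$ --- in terms of $M_z:=\sup\{\|f(y)\|:\|y-z\|\le 1-\|z\|,\ y\in\mathbb D_{\bf i}\}$ and the analogous $M_{\bar z}$; and I would observe that, for $y\in\mathbb D_{\bf i}$, $\|y-z\|^2=(\Re y-q_0)^2+(\Im y-|{\bf q}|)^2$ and $1-\|z\|=1-\|q\|$, so that $M_z$ and $M_{\bar z}$ are precisely the two suprema appearing in the statement.

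The hard part is the accounting of constants in Fact~2. The crude bounds $\|\tfrac12(1\mp{\bf I}_q{\bf i})\|\le 1$ together with the passage from the $\pm{\bf i}\,(\cdot)\,{\bf i}$ form of Fact~2 to the full norms $\|f'\|,\|f\|$ only deliver a fixed multiple of $M_z+M_{\bar z}$; to reach the sharp bound I would instead keep the exact values $\|\tfrac12(1-{\bf I}_q{\bf i})\|=|\cos(\theta/2)|$ and $\|\tfrac12(1+{\bf I}_q{\bf i})\|=|\sin(\theta/2)|$ (with $\theta$ the angle between ${\bf i}$ and ${\bf I}_q$), exploit $\cos^2(\theta/2)+\sin^2(\theta/2)=1$, and apply the scalar Schwarz lemma \cite[Lemma~2]{P} directly to the two $\mathbb C({\bf i})$--valued holomorphic components of $f\mid_{\mathbb D_{\bf i}}$ on the discs about $z$ and $\bar z$, rather than to $f$ itself. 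Balancing these contributions against one another is where the real work of Fact~2 lies.
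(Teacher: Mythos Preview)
Your argument is correct, but for Facts~1 and~3 it takes a different route from the paper's. The paper never varies the slice: for the forward direction of Fact~1 it applies the Representation Formula to $f'$ at $q$, writing (with $x=q_0+{\bf i}|{\bf q}|\in\mathbb D_{\bf i}$)
\[
2\|f'(q)\|\le 2\|f'(x)\|+2\|f'(\bar x)\|\le \sum_{\pm}\|f'(x)\pm{\bf i}f'(x){\bf i}\|+\sum_{\pm}\|f'(\bar x)\pm{\bf i}f'(\bar x){\bf i}\|,
\]
and then invokes Fact~1 of the preceding proposition on the fixed slice $\mathbb D_{\bf i}$, using $\|x\|=\|q\|$; Fact~3 is handled ``in a similar way''. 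Your approach instead transports membership to ${}_{{\bf I}_q}\Lambda_\omega$ via Proposition~\ref{prop1}(4) (with the uniform constant from Proposition~\ref{prop01}(3)(a)) and applies the slice estimate on $\mathbb D_{{\bf I}_q}$. Both work; the paper's is a one--line application of the Representation Formula and stays on one slice, while yours makes explicit use of the slice--independence already established and avoids the $\pm{\bf i}(\cdot){\bf i}$ splitting.

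For Fact~2 your proof coincides with the paper's: both apply the Representation Formula to $f$ and $f'$ to reduce to $z=q_0+{\bf i}|{\bf q}|$ and $\bar z$ on $\mathbb D_{\bf i}$, then bound $\tfrac12(1-\|z\|)\|f'(z)\|+\|f(z)\|$ through the $\pm{\bf i}(\cdot){\bf i}$ decomposition and Fact~2 of the preceding proposition, identifying $M_z,M_{\bar z}$ with the two suprema in the statement. The paper simply asserts the final bound $M_x+M_{\bar x}$ without the trigonometric sharpening you propose; your caution about the constants is reasonable, but the paper does not pursue it.
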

\begin{proof}
\begin{enumerate}
\item The sufficient condition is a direct consequence of the previous proposition and 
$$\|f'(x) \pm {\bf i} f'(x) {\bf i}\|\leq  2 \|f'(x) \|, \quad \forall x\in \mathbb D_{\bf i}.$$  
On the other hand, let $f\in {}_{\bf i} \Lambda_{\omega}(\mathbb D^4)\cap \mathcal {SR}(\mathbb D^4)$ and $q\in \mathbb D^4$ such that ${\bf q}$ is not the vector zero. Applying Representation Formula we deduce that  
\begin{align*}
2\|f '(q)\| \leq & 2\|f'(x)\| + 2\|f '(\bar x)\| \leq  \|f'(x) + {\bf i} f'(x) {\bf i}\|  +  \|f'(x) - {\bf i} f'(x) {\bf i}\|+ \\  
				              &   \|f'(\bar x) + {\bf i} f'(\bar x) {\bf i}\| + \|f'(\bar x) - {\bf i} f'(\bar  x) {\bf i}\|								
						\leq \\ & 4 C\frac{\omega (1-\|x\|)}{1-\|x\|} = 4 C\frac{\omega (1-\|q\|)}{1-\|q\|},
\end{align*}
where $x= q_0 + {\bf i}|{\bf q}|\in {\mathbb D}_{\bf i}.$
\item For fixed $q\in\mathbb D^4$ such that ${\bf q}$ is not the zero vector, let $x=x_0 + {\bf i}|{\bf q}| \in \mathbb D_{\bf i}$. By the Representation Formula we obtain  
\begin{align*} 
& \ \frac{1}{2}(1-\|q\|)\|f'(q)\|   +   \|f(q)  \| \\
\leq & ( \  \frac{1}{2}(1-\|x\|) \|f'(x)\| + \|f(x)\| \ )  + (\ \frac{1}{2}(1-\|\bar x\|) \|f'(\bar x)\| + \|f(\bar x)\| \ )    \\
\leq & ( \  \frac{1}{4}(1-\|x\|)\|f'(x) + {\bf i} f'(x) {\bf i}\| + \frac{1}{2} \|f(x) + {\bf i} f(x) {\bf i}\| \|  \  )   \\
     &  + ( \  \frac{1}{4}(1-\|  x\|) \|f'(x) - {\bf i} f'(x) {\bf i}\|  + \frac{1}{2} \|f(x) - {\bf i} f(x) {\bf i}\|  \ )    \\
 & + ( \  \frac{1}{4}(1-\|\bar x\|) \|f'(\bar x) + {\bf i} f'(\bar  x) {\bf i}\| +  \frac{1}{2} \|f (\bar  x) + {\bf i} f(\bar x) {\bf i}\| \| \  ) \\
 &  + ( \  \frac{1}{4}(1-\|  \bar  x\| ) \|f'(\bar  x) - {\bf i} f'(\bar  x) {\bf i}\|  + \frac{1}{2} \|f(\bar  x) - {\bf i} f(\bar  x) {\bf i}\|  \ )   \\
 \leq  & \    M_x +  M_{\bar  x},
\end{align*}
where 
$$M_x =\{\|f(y)\| \ \mid \  	\sqrt{ (\Re y - q_0)^2 +(\Im y - |{\bf q}| )^2} \leq 1-\|q\|, \ y \in {\mathbb D}_{\bf i}\}.$$
$$M_{\bar x} =\{\|f(y)\| \ \mid \  	\sqrt{ (\Re y - q_0)^2 +(\Im y + |{\bf q}| )^2} \leq 1-\|q\|, \ y \in {\mathbb D}_{\bf i}\}.$$
\item It follows in a similar way like Fact 1.
\end{enumerate}
\end{proof}
Here are some properties of $P_{{\bf i}}$ and its dependence on ${\bf i}$.
\begin{prop} Let $\bf{i} \in \mathbb S^2$. The following items hold
\begin{enumerate}
\item  Given $r\in \mathbb S^3$ write $T_r({q}):=rq\bar r $ for all $q\in \mathbb D^4$.
Then 
$$P_{T_r({\bf i})} [u] (q) = P_{\bf i} [u\circ T_{r}] (T_{r}^{-1}(q)),$$ 
for all $u \in C^1 ( \mathbb S_{T_r({\bf i})}, \mathbb R)$.     
\item Given $f \in  C(\overline{\mathbb D^4}, \mathbb H)$ we have on ${\mathbb D}_{\bf i}$ that 
$$\displaystyle P_{\bf i }[\|f\pm {\bf i} f{\bf i}\| ] \leq  2 P_{\bf i }[\|f\|] \leq P_{\bf i }[\|f- {\bf i} f{\bf i}\|] + P_{\bf i }[\|f+ {\bf i} f{\bf i}\|].$$
\item  Given $f\in  \mathcal {SR}({\mathbb D^4})\cap C(\overline{\mathbb D^4}, \mathbb H)$ and ${\bf j}\in \mathbb S^2$ it follows that 
$$\displaystyle\frac{1}{2\pi}\int_{0}^{2\pi} \|(x- e^{{\bf j} t})^{-*2}*  f(e^{{\bf j} t})  \| (1-\|x\|^2) dt  \leq  2 P_{\bf i} [\|f\|](x), $$ for all $x\in {\mathbb D}_{\bf i}$.
\end{enumerate}
\end{prop}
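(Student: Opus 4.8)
\emph{Plan.} The plan is to deduce item 3 from the slice regular Cauchy formula, turning the integral into a scalar one and then estimating it by a Poisson integral on the slice $\mathbb C({\bf i})$. I would first recall that for $f\in\mathcal{SR}(\mathbb D^4)\cap C(\overline{\mathbb D^4},\mathbb H)$ and any ${\bf j}\in\mathbb S^2$ the Cauchy formula reads $f(q)=\frac1{2\pi}\int_0^{2\pi}(e^{{\bf j}t}-q)^{-*}e^{{\bf j}t}f(e^{{\bf j}t})\,dt$, with $q\mapsto(e^{{\bf j}t}-q)^{-*}$ slice regular; differentiating in $q$ under the integral sign and using $(g^{-*})'=-g^{-*}*g'*g^{-*}$ for $g(q)=e^{{\bf j}t}-q$ gives $f'(q)=\frac1{2\pi}\int_0^{2\pi}(q-e^{{\bf j}t})^{-*2}e^{{\bf j}t}f(e^{{\bf j}t})\,dt$. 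Since $f(e^{{\bf j}t})$ is a constant quaternion, $*$-multiplying by it on the right is ordinary multiplication, so $\|(x-e^{{\bf j}t})^{-*2}*f(e^{{\bf j}t})\|=\|(x-e^{{\bf j}t})^{-*2}\|\,\|f(e^{{\bf j}t})\|$ (the first factor denoting the norm of the value at $x$) and $\|e^{{\bf j}t}f(e^{{\bf j}t})\|=\|f(e^{{\bf j}t})\|$; hence the left-hand side of item 3 equals $\frac{1-\|x\|^2}{2\pi}\int_0^{2\pi}\|(x-e^{{\bf j}t})^{-*2}\|\,\|f(e^{{\bf j}t})\|\,dt$.

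Next I would estimate the two factors in terms of data on $\mathbb C({\bf i})$. From $(q-e^{{\bf j}t})^{-*}=(q^2-2\cos t\,q+1)^{-1}(q-e^{-{\bf j}t})$ and the intrinsic character of $q\mapsto q^2-2\cos t\,q+1$, one has $(x-e^{{\bf j}t})^{-*2}=(x^2-2\cos t\,x+1)^{-2}\,(x-e^{-{\bf j}t})*(x-e^{-{\bf j}t})$; for $x\in\mathbb D_{\bf i}$ the denominator lies in $\mathbb C({\bf i})$ and factors there as $(x-e^{{\bf i}t})(x-e^{-{\bf i}t})$, while evaluating the $*$-square at $x$ through $p*p\,(x)=p(x)\,p\bigl(p(x)^{-1}xp(x)\bigr)$ (note that $p(x)^{-1}xp(x)$ has the real part and norm of $x$) leads to $\|(x-e^{{\bf j}t})^{-*2}\|\leq\frac1{\|x-e^{{\bf i}t}\|^{2}}+\frac1{\|x-e^{-{\bf i}t}\|^{2}}$ (with the exact value $\tfrac1{\sqrt2}\bigl(\|x-e^{{\bf i}t}\|^{-4}+\|x-e^{-{\bf i}t}\|^{-4}\bigr)^{1/2}$ when ${\bf i}\perp{\bf j}$). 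On the other hand, the Representation Formula gives $f(e^{{\bf j}t})=\tfrac12(1-{\bf j}{\bf i})f(e^{{\bf i}t})+\tfrac12(1+{\bf j}{\bf i})f(e^{-{\bf i}t})$, and expanding the norm squared yields $\|f(e^{{\bf j}t})\|^2\leq\|f(e^{{\bf i}t})\|^2+\|f(e^{-{\bf i}t})\|^2$ (for ${\bf i}\perp{\bf j}$; a similar estimate holds in general). Inserting these two bounds, integrating, and carrying out the change of variable $t\mapsto-t$ — which, because $\|x-e^{-{\bf i}t}\|=\|\bar x-e^{{\bf i}t}\|$ and $\|f\|$ is sampled on $\mathbb S_{\bf i}$, interchanges the two resulting summands — one recognizes each piece as a value of $P_{\bf i}[\|f\|]$ and reassembles them, using that the left-hand side is symmetric under $x\mapsto\bar x$, into $2\,P_{\bf i}[\|f\|](x)$.

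The hard part will be the kernel estimate, that is, controlling $(x-e^{{\bf j}t})^{-*2}$ when $x$ lies off the slice $\mathbb C({\bf j})$, together with the bookkeeping in the last step that is needed so that the ``reflected'' contribution $P_{\bf i}[\|f\|](\bar x)$ is absorbed into the constant $2$ rather than appearing as a separate term.
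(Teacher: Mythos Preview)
Your plan for item~3 has a real gap, precisely the one you flag at the end. Once you factor $\|(x-e^{{\bf j}t})^{-*2}*f(e^{{\bf j}t})\|=\|(x-e^{{\bf j}t})^{-*2}\|\,\|f(e^{{\bf j}t})\|$ and bound the two factors separately by (essentially) $\|x-e^{{\bf i}t}\|^{-2}+\|x-e^{-{\bf i}t}\|^{-2}$ and $\|f(e^{{\bf i}t})\|+\|f(e^{-{\bf i}t})\|$, the product contains the mismatched cross terms $\|x-e^{{\bf i}t}\|^{-2}\|f(e^{-{\bf i}t})\|$ and $\|x-e^{-{\bf i}t}\|^{-2}\|f(e^{{\bf i}t})\|$. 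After integration (and the substitution $t\mapsto -t$) these yield $P_{\bf i}[\|f\|](\bar x)$, not $P_{\bf i}[\|f\|](x)$. Since there is no reason for $P_{\bf i}[\|f\|](\bar x)\le P_{\bf i}[\|f\|](x)$ in general, the best this route gives is $2P_{\bf i}[\|f\|](x)+2P_{\bf i}[\|f\|](\bar x)$. Symmetry of the left-hand side under $x\mapsto\bar x$ (even if it held) would not help: it only confirms that this symmetric bound is the natural outcome of your splitting, not that the $\bar x$ contribution can be dropped.

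The paper avoids this by applying the Representation Formula to the \emph{whole} integrand rather than to $f$ alone. For $x\in\mathbb D_{\bf i}$ the expression reduces on the slice $\mathbb C({\bf i})$ to the ordinary $(x-e^{\pm{\bf i}t})^{-2}f(e^{\pm{\bf i}t})$, and one obtains
\[
(x-e^{{\bf j}t})^{-*2}*f(e^{{\bf j}t})=\tfrac12(1-{\bf j}{\bf i})(x-e^{{\bf i}t})^{-2}f(e^{{\bf i}t})+\tfrac12(1+{\bf j}{\bf i})(x-e^{-{\bf i}t})^{-2}f(e^{-{\bf i}t}).
\]
Here the kernel at $e^{\pm{\bf i}t}$ is automatically paired with $f(e^{\pm{\bf i}t})$, so no cross terms arise. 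Taking norms (using $\|1\pm{\bf j}{\bf i}\|\le 2$), integrating, and substituting $t\mapsto -t$ in one of the two integrals gives exactly $2P_{\bf i}[\|f\|](x)$. The moral is: do not separate the kernel from $f$ before passing to the slice $\mathbb C({\bf i})$.
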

\begin{proof}
\begin{enumerate}
\item Since $e^{r{\bf i} \bar r t}= re^{ {\bf i}  t} \bar r $ we have  
\begin{align*}
P_{T_r({\bf i})} [u] (q) = \frac{1}{2\pi }\int_{0}^{2\pi} u(e^{r{\bf i} \bar r t} ) \frac{1-\|q\|^2}{ \|q- e^{r{\bf i} \bar r t} \|^2 } dt ,
\end{align*}
and it may be concluded that 
\begin{align*}
P_{T_r({\bf i})} [u] (q) = \frac{1}{2\pi }\int_{0}^{2\pi} u\circ T_r (e^{{\bf i}  t} ) 
\frac{1-\|\bar rq  r \|^2}{ \|\bar r q r- e^{ {\bf i} t} \|^2}dt.
\end{align*}
\item It is due to  \eqref{components} and \eqref{norm_components}
\item Let $x\in {\mathbb D}_{\bf i}$ and ${\bf j}\in \mathbb S^2$. According to the Representation Formula and the established continuity we have 
$$(x- e^{{\bf j} t})^{-*2}) *  f(e^{{\bf j} t})     = \frac{1}{2} [(1+{\bf j } {\bf i})(x- e^{ - {\bf i} t})^{- 2} f(e^{{-\bf i } t})  +  
(1-{\bf j } {\bf i}) (x- e^{  {\bf i} t})^{- 2} f(e^{{\bf i } t})].$$
Therefore 
\begin{align*} 
\|(x- e^{{\bf j} t})^{-*2}) *  f(e^{{\bf j} t})  \|(1-\|x\|^2)   = & 
\|x- e^{ - {\bf i} t}\|^{- 2} \|f(e^{{-\bf i } t})\| (1-\|x\|^2)      \\
 & + \|x- e^{  {\bf i} t}\|^{- 2} \|f(e^{{\bf i } t}) \| (1-\|x\|^2)     
\end{align*}
and
 \begin{align*} &  \frac{1}{2\pi}\int_{0}^{2\pi} \|(x- e^{{\bf j} t})^{-*2} *  f(e^{{\bf j} t})  \| (1-\|x\|^2) dt \\
 \leq  &  \frac{1}{2\pi}\int_{0}^{2\pi}  \|x- e^{ - {\bf i} t}\|^{- 2} \|f(e^{{-\bf i } t})\| (1-\|x\|^2) dt\\
 &     +   \frac{1}{2\pi}\int_{0}^{2\pi}  \|x- e^{  {\bf i} t}\|^{- 2} \|f(e^{{\bf i } t}) \| (1-\|x\|^2)dt   \\
= & 2 P_{\bf i} [\|f\|](x).   
\end{align*}
\end{enumerate} 
\end{proof}

\begin{prop} Set $f\in \mathcal{SR}(\mathbb D^4)\cap C(\overline{\mathbb D^4})$.
\begin{enumerate} 
\item Let $\omega$ be a regular majorant  such that  $\|f\|\in  {}_{\bf i}\Lambda_{\omega}(\mathbb S^3)$. Then $f\in {}_{\bf i}\Lambda_\omega (\mathbb D^4)$ if and only if 
$$P_{\bf{i}}(\|f\pm {\bf i} f {\bf i} \|)(x)-\|f (x)\pm {\bf i} f(x) {\bf i} \| \leq C \omega (1-\|x\|), \quad \forall x\in {\mathbb D}_{
\bf i}.$$
\item Let $\omega_1$, $\omega_2$ be two regular majorant  such that $\|f\|\in  {}_{\bf i}\Lambda_{\omega_1,\omega_2}(\mathbb S^3)$. Then $f\in {}_{\bf i}\Lambda_{\omega_1,\omega_2}(\mathbb D^4)$ if and only if 
\begin{align*}
P_{\bf{i}}(\|f- {\bf i} f {\bf i} \|)(x)-\|f (x)- {\bf i} f(x) {\bf i} \| \leq & C  \omega_1 (1-\|x\|),\\
P_{\bf{i}}(\|f+ {\bf i} f {\bf i} \|)(x)-\|f (x)+ {\bf i} f(x) {\bf i} \| \leq & C  \omega_2 (1-\|x\|), \quad \forall x\in {\mathbb D}_{
\bf i}.
\end{align*}
\end{enumerate}
\end{prop}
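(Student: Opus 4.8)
The strategy will be to transfer the whole statement onto a single slice $\mathbb C({\bf i})$, where it reduces to a pair of assertions about holomorphic functions of one complex variable, and then to quote Dyakonov's Theorem B above (together with the sharpenings of \cite{P}).

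First I would fix ${\bf j}\in\mathbb S^2$ orthogonal to ${\bf i}$ and apply the Splitting Property (Theorem \ref{1.3}) to write $f\mid_{\mathbb D_{\bf i}}=f_1+f_2{\bf j}$ with $f_1,f_2\in Hol(\mathbb D_{\bf i})$; since $f\in C(\overline{\mathbb D^4},\mathbb H)$ and, by \eqref{components}, $2f_1=f-{\bf i}f{\bf i}$ and $2f_2{\bf j}=f+{\bf i}f{\bf i}$ on $\mathbb D_{\bf i}$, both $f_1$ and $f_2$ extend continuously to $\overline{\mathbb D_{\bf i}}$. Because the quaternionic norm is multiplicative, on $\mathbb D_{\bf i}$ and on $\mathbb S_{\bf i}$ one has
\[
\|f-{\bf i}f{\bf i}\|=2\|f_1\|=2|f_1|,\qquad \|f+{\bf i}f{\bf i}\|=2\|f_2\|=2|f_2|,
\]
and, under the field isomorphism $\mathbb C({\bf i})\cong\mathbb C$ taking $\mathbb D_{\bf i}$ to $\mathbb D$ and $\mathbb S_{\bf i}$ to $\mathbb S^1$ (so that $\|x\|=|x|$ there), the ${\bf i}$-Poisson integral of a real function on $\mathbb S_{\bf i}$ is the classical Poisson integral; hence $P_{\bf i}(\|f-{\bf i}f{\bf i}\|)=2P[|f_1|]$ and $P_{\bf i}(\|f+{\bf i}f{\bf i}\|)=2P[|f_2|]$. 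Finally, by Fact 2 of Proposition \ref{prop1} together with Definition \ref{def2}, the membership $f\in{}_{\bf i}\Lambda_{\omega}(\mathbb D^4)$ is equivalent to $f_1,f_2\in\Lambda_{\omega}(\mathbb D_{\bf i})$, and $f\in{}_{\bf i}\Lambda_{\omega_1,\omega_2}(\mathbb D^4)$ is equivalent to $f_1\in\Lambda_{\omega_1}(\mathbb D_{\bf i})$ and $f_2\in\Lambda_{\omega_2}(\mathbb D_{\bf i})$.

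With these reductions in hand, both parts become Theorem B read coordinatewise. For part (1), the standing hypothesis guarantees, via \eqref{components}, that the boundary traces of $|f_1|$ and $|f_2|$ on $\mathbb S_{\bf i}$ lie in $\Lambda_{\omega}(\mathbb S^1)$, so Theorem B applies to each $f_k$: $f_k\in\Lambda_{\omega}(\mathbb D)$ if and only if $P[|f_k|](x)-|f_k(x)|\le C\omega(1-|x|)$. Multiplying by $2$ and inserting the identities above turns the $k=1$ case into the inequality with the $-$ sign and the $k=2$ case into the one with the $+$ sign, while the conjunction $f_1,f_2\in\Lambda_{\omega}(\mathbb D_{\bf i})$ is precisely $f\in{}_{\bf i}\Lambda_{\omega}(\mathbb D^4)$; this gives the claimed equivalence. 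Part (2) is the same, now applying Theorem B to $f_1$ with the majorant $\omega_1$ and to $f_2$ with the majorant $\omega_2$ (Theorem B needing nothing beyond regularity of each majorant), and using the $(\omega_1,\omega_2)$-description of ${}_{\bf i}\Lambda_{\omega_1,\omega_2}(\mathbb D^4)$ from Definition \ref{def2}.

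The step that requires genuine care — and which I expect to be the main obstacle — is exactly this transfer of the boundary hypothesis from $\|f\|$ to the moduli $|f_1|,|f_2|$ of the Splitting components along $\mathbb S_{\bf i}$, since it is what activates Theorem B. The remainder is bookkeeping: the combinations $f\mp{\bf i}f{\bf i}$ single out $2f_1$ and $2f_2$, the ${\bf i}$-Poisson integral collapses to the ordinary one on a slice, and the four classical ingredients behind Theorem B ($|f_k|$ subharmonic, $P[|f_k|]\ge|f_k|$, the Poisson integral preserving $\Lambda_{\omega}$, and $|f_k|\in\Lambda_{\omega}(\mathbb D)$) carry over verbatim.
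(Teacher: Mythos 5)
Your proposal is correct and follows essentially the same route as the paper, whose proof is exactly the combination you spell out: the slice decomposition \eqref{components}, Proposition \ref{prop1} (Fact 2), and Theorem B applied to each holomorphic component $f_1,f_2$ of $f\mid_{\mathbb D_{\bf i}}$, with the ${\bf i}$-Poisson integral read as the classical Poisson integral on $\mathbb D_{\bf i}$. The one step you rightly flag as delicate --- passing from the boundary hypothesis on $\|f\|$ to $|f_1|,|f_2|\in\Lambda_{\omega}(\mathbb S_{\bf i})$ --- is treated no more explicitly in the paper (it is implicit in its citation of \eqref{components}), and it is only legitimate if that hypothesis is read componentwise, i.e.\ as control of $\|f\mp{\bf i}f{\bf i}\|$ on $\mathbb S_{\bf i}$ with the respective majorants, since Lipschitz control of $\sqrt{\|f_1\|^2+\|f_2\|^2}$ alone does not yield control of each modulus separately.
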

\begin{proof}
\begin{enumerate}
\item  Combining \eqref{components} with Proposition \ref{prop1} and Theorem B of Section 2 completes the proof. 
\item  It follows from identities \eqref{components} and Theorem B of Section 2. 
\end{enumerate}
\end{proof}
 
\begin{cor}\label{cor56} Consider $f\in \mathcal{SR}(\mathbb D^4)\cap C(\overline{\mathbb D^4})$. 
\begin{enumerate}
\item  Let $\omega$ be a regular majorant  such that $\|f\|\in  {}_{\bf i}\Lambda_{\omega}(\mathbb S^3)$. If $f\in {}_{\bf i}\Lambda_{\omega} (\mathbb D^4)$ and 
$q\in \mathbb D^4$ satisfies  $$\langle q, e^{{\bf i} t} \rangle  \leq q_0 \cos t \pm |{\bf q}| \sin t,  \quad \forall t\in [0,2\pi],$$
then  
\begin{align*}
 P_{\bf{i}}(\|f  \|)(q)- 2\|f (q_0 \pm {\bf i}|{\bf q}|) \| \leq C \omega (1-\|q\|).
\end{align*}
\item Let $\omega_1$, $\omega_2$ be two regular majorant  such that $\|f\|\in  {}_{\bf i}\Lambda_{\omega_1,\omega_2}(\mathbb S^3)$. If $f\in {}_{\bf i}\Lambda_{\omega_1,\omega_2}(\mathbb D^4)$  and 
$q\in \mathbb D^4$ holds  $$\langle q, e^{{\bf i} t} \rangle  \leq q_0 \cos t \pm |{\bf q}| \sin t,  \quad \forall t\in [0,2\pi],$$
then  
 \begin{align*}
 P_{\bf{i}}(\|f  \|)(q) - 2\|f (q_0 \pm {\bf i}|{\bf q}|) \| \leq  C \left(\omega_1 (1-\|q\|) + \omega_2 (1-\|q\|) \right).
\end{align*}
\end{enumerate} 
\end{cor}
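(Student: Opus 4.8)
The plan is to transfer the estimate from the point $q\in\mathbb D^4$ to its slice representative $x:=q_0\pm{\bf i}\,|{\bf q}|\in\mathbb C({\bf i})$, the sign chosen to match the one in the hypothesis. Since $\|x\|^2=q_0^2+|{\bf q}|^2=\|q\|^2<1$ we have $x\in\mathbb D_{\bf i}$, and the ${\bf i}$-Poisson kernels at $q$ and at $x$ carry the same positive numerator $1-\|q\|^2=1-\|x\|^2$. A direct computation gives $\langle x,e^{{\bf i}t}\rangle=q_0\cos t\pm|{\bf q}|\sin t$, so the hypothesis reads $\langle q,e^{{\bf i}t}\rangle\le\langle x,e^{{\bf i}t}\rangle$ for every $t$. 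Expanding $\|q-e^{{\bf i}t}\|^2=1+\|q\|^2-2\langle q,e^{{\bf i}t}\rangle$ and likewise for $x$, and using $\|q\|=\|x\|$, one obtains $\|q-e^{{\bf i}t}\|\ge\|x-e^{{\bf i}t}\|$ for all $t$; since $\|f\|\ge 0$, integrating against the Poisson kernel yields
\[
P_{{\bf i}}(\|f\|)(q)\le P_{{\bf i}}(\|f\|)(x).
\]

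Next I would apply, at the point $x\in\mathbb D_{\bf i}$, the inequality $2P_{{\bf i}}(\|f\|)\le P_{{\bf i}}(\|f-{\bf i}f{\bf i}\|)+P_{{\bf i}}(\|f+{\bf i}f{\bf i}\|)$ furnished by the second item of the Proposition on the properties of $P_{{\bf i}}$, and then the Proposition immediately preceding this Corollary — applicable since $f\in{}_{\bf i}\Lambda_{\omega}(\mathbb D^4)$ (resp. ${}_{\bf i}\Lambda_{\omega_1,\omega_2}(\mathbb D^4)$) and $\|f\|$ belongs to the corresponding boundary class — which bounds the two summands by
\[
P_{{\bf i}}(\|f-{\bf i}f{\bf i}\|)(x)\le\|f(x)-{\bf i}f(x){\bf i}\|+C\,\omega_1(1-\|x\|),
\]
\[
P_{{\bf i}}(\|f+{\bf i}f{\bf i}\|)(x)\le\|f(x)+{\bf i}f(x){\bf i}\|+C\,\omega_2(1-\|x\|),
\]
with $\omega_1=\omega_2=\omega$ in part (1). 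Using the crude estimate $\|f(x)\pm{\bf i}f(x){\bf i}\|\le 2\|f(x)\|$ and adding, $2P_{{\bf i}}(\|f\|)(x)\le 4\|f(x)\|+C\bigl(\omega_1(1-\|x\|)+\omega_2(1-\|x\|)\bigr)$, hence $P_{{\bf i}}(\|f\|)(x)\le 2\|f(x)\|+C\bigl(\omega_1(1-\|x\|)+\omega_2(1-\|x\|)\bigr)$. Combining with the first display and recalling $\|x\|=\|q\|$, $x=q_0\pm{\bf i}|{\bf q}|$, this is precisely the assertion of part (2); in part (1) the two majorant terms collapse to $C\,\omega(1-\|q\|)$.

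The argument has essentially no hard step: the statement is the preceding Proposition together with the Poisson-integral inequalities and a triangle inequality. The only point requiring care is the kernel comparison of the first paragraph — verifying that the somewhat ad hoc hypothesis on $\langle q,e^{{\bf i}t}\rangle$ is exactly what forces $\|q-e^{{\bf i}t}\|\ge\|q_0\pm{\bf i}|{\bf q}|-e^{{\bf i}t}\|$ — together with keeping the three occurrences of the sign $\pm$ (in the hypothesis, in the representative $x$, and in the term $2\|f(q_0\pm{\bf i}|{\bf q}|)\|$) consistent throughout.
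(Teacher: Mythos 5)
Your proposal is correct and follows essentially the same route as the paper: pass from $q$ to the slice representative $x=q_0\pm{\bf i}|{\bf q}|$ via the kernel comparison forced by the hypothesis on $\langle q,e^{{\bf i}t}\rangle$, then use the inequality $2P_{\bf i}(\|f\|)\le P_{\bf i}(\|f-{\bf i}f{\bf i}\|)+P_{\bf i}(\|f+{\bf i}f{\bf i}\|)$ together with the preceding Proposition's Poisson-integral characterization and the bound $\|f(x)\pm{\bf i}f(x){\bf i}\|\le 2\|f(x)\|$. You in fact spell out the kernel comparison (which the paper dismisses with ``it follows easily'') in more detail, but the argument is the same.
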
 
 \begin{proof}
\begin{enumerate} 
\item 
Let $x= q_0 \pm {\bf i}|{\bf q}|$. It follows easily that  
$ |q-e^{{\bf i}}| \geq |x- e^{{\bf i} t}|$ for all $t\in [0,2\pi]$. 
Of course,  
\begin{align*}
 2P_{\bf{i}}(\|f  \|)(q) \leq  &    2 P_{\bf{i}}(\|f  \|)(x)   \leq   P_{\bf{i}}(\|f+  {\bf i} f {\bf i} \|)(x)  +  P_{\bf{i}}(\|f- {\bf i} f {\bf i} \|)(x)  \\
\leq  &   \|f (x) + {\bf i} f(x) {\bf i} \| + \|f (x)- {\bf i} f(x) {\bf i} \| + 2 C \omega (1-\|x\|).
\end{align*}
Therefore 
\begin{align*}
 P_{\bf{i}}(\|f  \|)(q)\leq 2\|f (q_0 \pm {\bf i}|{\bf q}|) \| + C \omega (1-\|q\|).
\end{align*}
\item Similar arguments to those above.
\end{enumerate}
\end{proof}

\section{Conclusions and future works} 
In summary, characterizations of the Lipschitz type spaces of slice regular functions in the unit ball of the skew-field of quaternions with prescribed modulus of continuity, despite the non-commutativity of quaternions, are presented. The main results go on to the global case. Importantly, the present findings suggest the possibility to extend the study to the theory of slice monogenic functions associated to Clifford algebras, as a good starting point for further research.

\subsection*{Acknowledgment}
The authors greatly appreciate the many helpful comments and suggestions made by the reviewers.

\subsubsection*{Funding} Instituto Polit\'ecnico Nacional (grant number SIP20211188, SIP20221274) and CONACYT.}

\subsection*{Compliance with ethical standards}

\subsubsection*{Conflict of interest} The authors declare that they have no conflict of interest regarding the publication of this paper.

  \end{document}